\newcommand{\rint}{\mathbb{Z}}
\newcommand{\rat}{\mathbb{Q}}
\newcommand{\real}{\mathbb{R}}
\newcommand{\complex}{\mathbb{C}}
\newcommand{\plog}{\mathrm{Li}}
\newcommand{\wt}[1]{\widetilde{#1}}
\newcommand{\cang}{\varphi}
\newcommand{\cutz}{{\ooalign{$\multimap$\crcr{\hspace{1.66ex}\raisebox{.2ex}{\scriptsize $\bullet$}}}}}
\newcommand{\cut}[1]{\ell_{#1}}
\newcommand{\cutsym}[1]{\ell_{#1}^{\,\mathrm{sym}}}
\newcommand{\dom}[1]{(#1)\;\;}
\newcommand{\rnums}[1]{\mathrm{\rnum{#1}})\;\;}
\newcommand{\midflex}{\;\middle|\;}
\newcommand{\closure}[1]{\overline{#1}}
\newtheorem{theorem}{Theorem}[section]
\newtheorem{proposition}[theorem]{Proposition}
\newtheorem{corollary}[theorem]{Corollary}
\newtheorem{lemma}[theorem]{Lemma}
\newtheorem{rem}[theorem]{Remark}
\newenvironment{remark}{\begin{rem}\normalfont}{\end{rem}}
\def\rnum#1{\expandafter{\romannumeral #1}}
\def\Rnum#1{\uppercase\expandafter{\romannumeral #1}}
\begin{document}

\begin{frontmatter}



\title{An explicit expression of the Lerch zeta function on maximal domains of holomorphy}


\author{Rintaro Kozuma}

\affiliation{organization={College of International Management, Ritsumeikan Asia Pacific University},
            city={Beppu-shi},
            state={Oita},
            postcode={874-8577},
            country={Japan}}

\begin{abstract}
We give two results on the Lerch zeta function $\Phi(z,\,s,\,w)$.
The first is to give an explicit expression providing both the analytic continuation of $\Phi$
 in $n$-variables $(n \in \{1,\,2,\,3\})$ to maximal domains of holomorphy in $\complex^n$
 with computable evaluation
 and an extended formula for the special values of $\Phi$ at non-positive integers in the variable $s$.
The second is to show that Lerch's functional equation is essentially the same
 as Apostol's functional equation using the first result.
\end{abstract}



\begin{keyword}
Lerch zeta function \sep analytic continuation \sep functional equation

\MSC[2020] 11M35
\end{keyword}

\end{frontmatter}


\section{Introduction}

The Lerch zeta function (sometimes called the Lerch transcendent)
\begin{align}\label{def:init}
\Phi(z,\,s,\,w)=\sum_{n=0}^{\infty} \frac{z^n}{(n+w)^s},
\end{align}
 introduced by Lerch~\cite{Ler} and Lipschitz~\cite{Lip} in the case $z \in \complex,\,|z| \leq 1$, is a generalization of the Hurwitz zeta function
$$\zeta(s,\,w)=\sum_{n=0}^{\infty} \frac{1}{(n+w)^s}$$
and the polylogarithm
$$\plog_s(z)=\sum_{n=1}^{\infty} \frac{z^n}{n^s}.$$
One can verify that $\Phi$ converges absolutely in either the case
\begin{align}\label{absconv}
\text{$|z|<1$, $s \in \complex$, $w \in \complex\setminus\rint_{\leq 0}$,
 or $|z|=1$, $\Re(s)>1$, $w \in \complex\setminus\rint_{\leq 0}$.}
\end{align}
Many individual works on analytic continuation of $\Phi(z,\,s,\,w)$ in some of the variables $z,\,s,\,w$
 with restricted condition
 have been done using various integral representation technics.
Lerch~\cite{Ler} obtained the analytic continuation to the whole $s$-plane
 with the condition $|z| \leq 1,\,\Re(w)>0$ according to the line of Riemann's classic method.
Berndt~\cite{Ber} gave a simple proof for the analytic continuation in $s$ with the condition $z=e^{2\pi ia}\,(0<a<1),\,0<w \leq 1$
 using an original contour integral representation.
Ferreira and L\'{o}pez~\cite[Proposition~1]{Fer-Lop} showed the analytic continuation in $s$ with the condition
 $z \in \Omega_a$, $w \in \complex\setminus\real_{<0}$, where $\Omega_a$ is a proper subset of $\complex$
 depending on the sign of $\Re(w)$.
Also, as studied by Erd\'{e}lyi~\cite[\S1.11]{HTF}, the integral representation given by Lerch
 can be regarded as an analytic function of $z$ which is holomorphic in a suitable cut plane.
On the other hand,
 Kanemitsu, Katsurada and Yoshimoto~\cite{KKY}, and
 Lagarias and Li~\cite{LL2}, \cite{LL3} considered $\Phi(z,\,s,\,w)$ as an analytic function
 in three complex variables $(z,\,s,\,w)$ and gave analytic continuations
 to various large domains in $\complex^3$.
Especially, Lagarias and Li obtained an analytic continuation to a multivalued function on
 a maximal domain of holomorphy in three variables~\cite[Theorem~3.6]{LL3}.

In this paper,
for a set of complex variables $V$ on a domain $D \subset \complex^n$,
 we say {\it holomorphic on $D$ in $V$} or simply {\it holomorphic on $D$}
 by means of the holomorphy of multivariable(one variable) complex functions.
 {\it i.e.} $f$ is holomorphic with respect to each variable in $V$ on $D$
 with other fixed variables.
Also, we fix a real $\cang \in \real\setminus2\pi\rint$ ({\it i.e.} $\Re(e^{i\cang})<1$)
 and choose the principal value of the argument (function) $\arg \lambda$ of a complex $\lambda \in \complex^*$
 by $\cang \leq \arg \lambda<\cang+2\pi$, which defines the principal branch of the logarithm function $\log\lambda$,
 forces $(n+w)^s=e^{s\log(n+w)}$ to be single-valued,
 and $\log r$ is not always real for $r \in \real_{>0}$; namely,
 there exists a unique integer $\nu$ (depending on the choice $\cang$) such that
$$\log r=\mathrm{ln}\,r+2\pi i \nu \quad \text{ for any $r \in \real_{>0}$},$$
 where $\mathrm{ln}\,r\,(r \in \real_{>0})$ denotes the usual logarithm valued in $\real$.
Then the infinite series~(\ref{def:init}) defines a single-valued function in complex multivariable(one variable)
 holomorphic on six simply-connected domains
\begin{align*}\label{initdomains}
\begin{array}{ll}
\dom{1} |z|<1,\,s \in \complex,\,w \in \complex\setminus\cut{\varphi} \text{ in $(z,\,s,\,w)$},\\
\dom{2} |z|<1,\,s \in \complex \text{ in $(z,\,s)$ with fixed $w \in \cut{\varphi}\setminus\rint_{\leq 0}$},\\
\dom{1'} \Re(s)>1,\,w \in \complex\setminus\cut{\varphi} \text{ in $(s,\,w)$ with fixed $z \ne 1,\,|z|=1$},\\
\dom{3} \Re(s)>1,\,w \in \complex\setminus\cut{\varphi} \text{ in $(s,\,w)$ with $z=1$},\\
\dom{2'} \Re(s)>1 \text{ in $s$ with fixed $z \ne 1,\,|z|=1,\,w \in \cut{\varphi}\setminus\rint_{\leq 0}$},\\
\dom{4} \Re(s)>1 \text{ in $s$ with $z=1$ and fixed $w \in \cut{\varphi}\setminus\rint_{\leq 0}$},
\end{array}\tag{$D_0$}
\end{align*}
 where $\cut{\cang}$ denotes the set
\begin{align*}
\{w \in \complex \mid w=-n+re^{i\cang} \text{ for $r \geq 0,\,n \in \rint_{\geq 0}$}\}
\end{align*}
 of infinite copies of the complex half line $\{re^{i\cang} \in \complex \mid r \geq 0\}$ (together with $0$).
\begin{figure}[ht]
\centering
\includegraphics{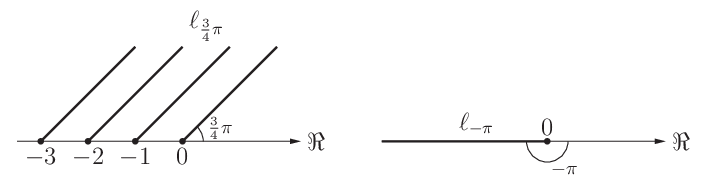}
\caption{\small The lines $\ell_{\cang}$ for $\cang=\frac{3}{4}\pi,\,-\pi$}
\end{figure}
The domains~(\ref{initdomains}) contain the region~(\ref{absconv}),
 and the function~(\ref{def:init}) is discontinuous for all $w \in \cut{\cang}$ with $s \in \complex\setminus\rint$ and all $z$ with $|z|=1$.
Our first result establishes a brief proof for the single-valued analytic continuation of $\Phi$ on (\ref{initdomains}) to
 maximal domains of holomorphy in $\complex^n$ in any $n$-variables in $\{z,\,s,\,w\}$ for each $n \in \{1,\,2,\,3\}$,
 including previous results in \cite{Ler}, \cite{Apo}, \cite{Ber}, \cite{Fer-Lop}, \cite{KKY}, \cite{LL3}.
Specifically, we have the following theorem.

\begin{theorem}\label{thm:explicit}
Let $\epsilon$ be a positive real satisfying
\begin{align}\label{ass1:epsilon}
\Re(e^{i\cang})<\Re(e^{i\arg(1+i\epsilon)})\,(=1/\sqrt{1+\epsilon^2}), \tag{$\epsilon$\,1}
\end{align}
 and let $m,\, N \in \rint_{\geq 0}$.
For each $j \in \{1,\,2,\,3,\,4\}$ $($resp. $j \in \{1,\,2\})$,
 the Lerch zeta function~\eqref{def:init} on the domain~\eqref{initdomains}$(j)$ $($resp. \eqref{initdomains}$(j'))$
 can be analytically continued to the single-valued function 
\begin{align}\label{func:anaconti}
\sum_{n=0}^{N-1}\frac{z^n}{(n+w)^s}&+\frac{z^N}{e^{4\pi i \nu s}\Gamma(s)}\sum_{r=0}^m \frac{\mathscr{B}_r(z,\,N+w)}{r!}\frac{(-1)^r\epsilon^{s+r-1}}{s+r-1} \notag\\
&+\frac{z^N}{e^{4\pi i \nu s}\Gamma(s)}\wt{I}_N+\frac{z^N}{e^{4\pi i \nu s}\Gamma(s)}J_{N,\,m}
\end{align}
holomorphic on the domain~\eqref{domains}$(j)$ $($resp. \eqref{domains}$(j))$, where
\begin{align}\label{domains}
\begin{array}{l}
\dom{1} \wt{D}_z\setminus\cutz_{\cang'} \times \wt{D}_s \times \wt{D}_w\setminus\cut{\cang} \text{ in $(z,\,s,\,w)$},\\
\dom{2} \wt{D}_z\setminus\cutz_{\cang'}\times \wt{D}_s \text{ in $(z,\,s)$ with fixed $w \in (\cut{\cang}\setminus\rint_{\leq 0}) \cap \wt{D}_w$},\\
\dom{3} \wt{D}_s\setminus P_z \times \wt{D}_w\setminus\cut{\cang} \text{ in $(s,\,w)$ with fixed $z \in \cutz_{\cang'} \setminus (1,\,e^{\epsilon}]$},\\
\dom{4} \wt{D}_s\setminus P_z \text{ in $s$ with fixed $z \in \cutz_{\cang'} \setminus (1,\,e^{\epsilon}],\,w \in (\cut{\cang}\setminus\rint_{\leq 0}) \cap \wt{D}_w$},
\end{array}\tag{$\wt{D}$}
\end{align}
\vspace{-1em}
\begin{align*}
\wt{D}_z&:=\left\{z \in \complex \mid z \notin [1,\,e^{\epsilon}]\right\},\\
\wt{D}_s&:=\left\{s \in \complex \mid \Re(s)>-m\right\},\\
\wt{D}_w&:=\left\{w \in \complex \mid \Re(w)>-N\right\},
\end{align*}
and $\cutz_{\cang'}:=\{z \in \complex \mid z=1+re^{i\cang'} \text{ for $r \geq 0$}\}$
 with any fixed real $\cang'$ satisfying $\Re(e^{i\cang'}) \geq 0$,
 and $P_z:=\{1\}$ when $z=1$, or $P_z:=\emptyset$ when $z \ne 1$.
Also, $\mathscr{B}_r(z,\,w)\,(r \in \rint_{\geq 0})$ is Apostol's rational function defined by \eqref{def:Apo},
 and $\wt{I}_N,\,J_{N,\,m}$ are holomorphic functions $($see $\S\ref{subsec:hol}$, $\S\ref{subsec:AI}$, $\S\ref{subsec:MI})$ on the domains~\eqref{domains}.
Furthermore, for any $(z,\,s,\,w) \in \complex \times \wt{D}_s \times \wt{D}_w$,
 if $\epsilon$ satisfies the additional condition both $z \ne e^{\epsilon}$ and
\begin{align}
0 < \epsilon \leq
\begin{cases}
\dfrac{1}{4(|N+w|+1)} & \text{ if $z=1$},\\
\dfrac{|z-1|}{4(|z|+1)(|N+w|+1)} & \text{ if $z \ne 1$},
\end{cases} \tag{$\epsilon$\,2}
\end{align}
 then the following evaluation holds.
\begin{align*}
|\wt{I}_N(z,\,s,\,w)| &\leq
\begin{cases}
\dfrac{1}{\epsilon A_{\epsilon}(z)}E_{\epsilon,\,N}(s,\,w) & \text{if $z \in \complex \setminus [e^{\epsilon},\,\infty)$},\\
\left(1+\frac{1}{\epsilon}\right)\dfrac{C_{\epsilon}(s,\,w)}{B_{\epsilon}(z)}E_{\epsilon,\,N}(s,\,w) & \text{if $z \in (e^{\epsilon},\,\infty)$}.\\
\end{cases}\\
\left|J_{N,\,m}(z,\,s,\,w)\right| &\leq
\begin{cases}
\dfrac{1}{2^m}\dfrac{\epsilon^{\Re(s)-1}}{\Re(s)+m} & \text{ if $z=1$},\\
\dfrac{2^{-m+1}}{|z-1|}\dfrac{\epsilon^{\Re(s)}}{\Re(s)+m} & \text{ if $z \ne 1$}.
\end{cases}
\end{align*}
Here $A_{\epsilon},\,B_{\epsilon},\,C_{\epsilon},\,E_{\epsilon,\,N}$ are defined as \eqref{def:ABC}.
Especially, $\wt{I}_N\,(\text{resp. $J_{N,\,m}$}) \to 0$ as $N\,(\text{resp. $m$}) \to \infty$ on any compact subset of $\complex \times \wt{D}_s \times \wt{D}_w$.
\end{theorem}

From Theorem~\ref{thm:explicit}, taking the limit $\epsilon \to 0$ and $m,\,N \to \infty$ in the expression~\eqref{func:anaconti},
 we have the following theorem
(The special values of $\Phi$ at $s \in \rint_{\leq 0}$ follows from \S\ref{subsec:value}).

\begin{theorem}\label{thm:anaconti}
For each $j \in \{1,\,2,\,3,\,4\}$ $($resp. $j \in \{1,\,2\})$,
 the Lerch zeta function~{\rm (\ref{def:init})} on the domain~$($\ref{initdomains}$)(j)$ $($resp. $($\ref{initdomains}$)(j'))$
 can be analytically continued to a single-valued function holomorphic on the domain~$(D_1)(j)$ $($resp. $(D_1)(j))$, where
\begin{align*}\label{finaldomains}
\begin{array}{ll}
\dom{1} \complex\setminus\cutz_{\cang'} \times \complex \times \complex\setminus\cut{\cang} \text{ in three variables $(z,\,s,\,w)$},\\
\dom{2} \complex\setminus\cutz_{\cang'} \times \complex \text{ in two variables $(z,\,s)$ with fixed $w \in \cut{\cang}\setminus\rint_{\leq 0}$},\\
\dom{3} \complex\setminus P_z \times \complex\setminus\cut{\cang} \text{ in two variables $(s,\,w)$ with fixed $z \in \cutz_{\cang'}$},\\
\dom{4} \complex\setminus P_z \text{ in one variable $s$ with fixed $z \in \cutz_{\cang'},\,w \in \cut{\cang}\setminus\rint_{\leq 0}$}.
\end{array}\tag{$D_1$}
\end{align*}
Especially, for any fixed $(z,\,w) \in \complex \times \complex\setminus\rint_{\leq 0}$,
 The function~{\rm (\ref{def:init})} has the analytic continuation to $\complex\setminus P_z$ in the variable $s$,
 having a simple pole at $s=1$ with residue $1$ if and only if $z=1$.
Furthermore, for any positive integer $r$, by means of analytic continuation, we have
$$\Phi(z,\,1-r,\,w)=-\frac{\mathscr{B}_r(z,\,w)}{r}.$$
\end{theorem}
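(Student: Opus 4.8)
The plan is to deduce every assertion from a single Hankel-type integral representation of $\Phi$. \emph{Step 1 (reductions).} First I will use the elementary recursion
$$\Phi(z,s,w)=\sum_{n=0}^{N-1}\frac{z^{n}}{(n+w)^{s}}+z^{N}\,\Phi(z,s,w+N),$$
noting that its finite sum is — by the fixed branch of $\log$ — holomorphic in $(z,s,w)$ exactly off the locus where some $(n+w)^{-s}$ meets the cut of $\log$, i.e. on $\complex\times\complex\times(\complex\setminus\cut{\cang})$. Choosing $N$ with $\Re(w+N)>0$ reduces the continuation, and the analysis at $s=1$ and $s=1-r$, to the region $|z|<1$, $\Re(w)>0$; this also explains the deletion of $\cut{\cang}$ in $(D_{1})$ and shows that a single fixed $w\in\cut{\cang}\setminus\rint_{\le0}$ is harmless (no continuity in $w$ being then demanded), which is case $(D_{1})(2)$. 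I will also record why the deletions are sharp: $w\mapsto(n+w)^{-s}$ is genuinely discontinuous when $n+w\in\{re^{i\cang}:r\ge0\}$, and $z=1$ is a true branch point of $z\mapsto\Phi(z,s,w)$ for $s\notin\rint$, so some ray must be removed in $z$; the admissible rays turn out to be exactly $\cutz_{\cang'}$ with $\Re(e^{i\cang'})\ge0$, this inequality being the compatibility condition with the fixed branch of $\log$.

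\emph{Step 2 (integral representations).} For $\Re(s)>1$, $\Re(w)>0$, $|z|\le1$ I will get the Mellin representation
$$\Gamma(s)\,\Phi(z,s,w)=\int_{0}^{\infty}\frac{t^{s-1}e^{-wt}}{1-ze^{-t}}\,dt$$
by termwise integration of $\Gamma(s)(n+w)^{-s}=\int_{0}^{\infty}t^{s-1}e^{-(n+w)t}\,dt$, then collapse the half-line onto a Hankel contour $\mathcal{C}$ (in from $+\infty$ above the positive real axis, once counterclockwise around $0$, back to $+\infty$ below) whose small circle and tails separate the branch point $t=0$ of $(-t)^{s-1}$ from every zero $t=\log z+2\pi ik$ of $1-ze^{-t}$ and keep the tails in a sector where $e^{-wt}$ decays, to arrive at
$$\Phi(z,s,w)=\frac{-\,\Gamma(1-s)}{2\pi i}\int_{\mathcal{C}}\frac{(-t)^{s-1}e^{-wt}}{1-ze^{-t}}\,dt$$
(using $\Gamma(s)\Gamma(1-s)=\pi/\sin\pi s$), now valid for all $s\in\complex$. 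Its integrand is entire in $s$ and jointly holomorphic in $(z,w)$ wherever $\mathcal{C}$ can be so drawn, with locally uniform convergence; since $\{|z|<1\}$ and $\complex\setminus\cutz_{\cang'}$ are simply connected, deforming $\mathcal{C}$ (notably tilting its tails, which only meet the zeros of $1-ze^{-t}$ when $z$ lies on a ray from $1$) as $z$ leaves the unit disc will give a \emph{single-valued} continuation, and fixing on which side of the zero $t=\log z$ the contour passes when $z\in\cutz_{\cang'}$ will give cases $(D_{1})(3)$ and $(D_{1})(4)$. Combined with Step 1 this yields the holomorphic continuation on each $(D_{1})(j)$, recovering the earlier partial results.

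\emph{Step 3 (pole at $s=1$ and values at $s=1-r$).} Fixing an admissible $(z,w)$: when $s=m\in\rint_{\ge1}$ the factor $(-t)^{m-1}$ is single-valued, the two edges of $\mathcal{C}$ cancel, and the contour integral equals $2\pi i\,\mathrm{Res}_{t=0}\big[(-t)^{m-1}e^{-wt}(1-ze^{-t})^{-1}\big]$. Since $e^{-wt}(1-ze^{-t})^{-1}$ is holomorphic at $t=0$ if $z\ne1$ and has a simple pole of residue $1$ there if $z=1$, this residue vanishes for $m\ge2$ in either case, so the poles of $\Gamma(1-s)$ at $s=2,3,\dots$ are cancelled and $\Phi(z,\cdot,w)$ is entire in $s$ except possibly at $s=1$; there the residue of the integral is $0$ if $z\ne1$ and $1$ if $z=1$, which with $\mathrm{Res}_{s=1}(-\Gamma(1-s))=1$ gives a simple pole of residue $1$ precisely when $z=1$ (this is the middle assertion, with $P_{z}$ as stated). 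For $s=1-r$, $r\in\rint_{\ge1}$, $(-t)^{-r}$ is again single-valued, whence
$$\Phi(z,1-r,w)=\frac{-\,\Gamma(r)}{2\pi i}\cdot 2\pi i\,\mathrm{Res}_{t=0}\!\Big[\frac{(-t)^{-r}e^{-wt}}{1-ze^{-t}}\Big]=-(r-1)!\,(-1)^{r}c_{r},$$
with $c_{r}$ the coefficient of $t^{r}$ in $t\,e^{-wt}(1-ze^{-t})^{-1}$. Reading the defining generating function $(\ref{def:Apo})$ of Apostol's $\mathscr{B}_{k}(z,w)$ after $t\mapsto-t$ gives $c_{r}=(-1)^{r}\mathscr{B}_{r}(z,w)/r!$; the two signs $(-1)^{r}$ cancel and $(r-1)!/r!=1/r$, so $\Phi(z,1-r,w)=-\mathscr{B}_{r}(z,w)/r$.

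\emph{Expected main obstacle.} The residue bookkeeping of Step 3 is routine; the real work is in Steps 1--2. The hard part will be to verify that the Hankel contour (equivalently the integration ray in the Mellin formula) can be chosen to depend continuously on $(z,w)$ while permanently separating $t=0$ from the moving zeros $\log z+2\pi ik$ — the most delicate case being $z\to1$, where the zero $t=\log z$ collides with the branch point — and then to pin down that the deleted set in $z$ is exactly $\cutz_{\cang'}$ with $\Re(e^{i\cang'})\ge0$, together with maximality, i.e. genuine non-continuability of the single-valued $\Phi$ across $\cutz_{\cang'}$ (because of the branch point at $z=1$) and across $\cut{\cang}$ (because of the built-in discontinuity of $w\mapsto(n+w)^{-s}$).
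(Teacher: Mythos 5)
Your plan is correct in outline, but it follows a genuinely different route from the paper. You continue $\Phi$ by the classical Riemann--Lerch device: the Mellin integral for $\Re(s)>1$, $\Re(w)>0$, collapsed onto a Hankel loop with prefactor $-\Gamma(1-s)/(2\pi i)$, continuation in $z$ by deforming the loop around the moving zeros $t=\log z+2\pi ik$ of $1-ze^{-t}$, single-valuedness on $\complex\setminus\cutz_{\cang'}$ by monodromy, and the pole at $s=1$ together with the values at $s=1-r$ by residues at $t=0$, read off from Apostol's generating function \eqref{def:Apo} after $t\mapsto -t$. The paper deliberately avoids the loop and the reflection factor: it keeps the integral on $[0,\infty)$, splits it at a finite $\alpha$ into $H_N+I_{N,m}+J_{N,m}$ as in \eqref{eq:exp}, integrates the Taylor-polynomial piece $J_{N,m}$ in closed form so that $J_{N,m}/\Gamma(s)$ already gives the continuation in $s$ (the poles at $s=1-r$ are cancelled by the zeros of $1/\Gamma$, and the surviving $r=0$ term produces the pole at $s=1$ exactly when $z=1$, which is where $P_z$ comes from), and deforms only the finite segment $[0,\alpha]$ (the polygonal path $L_\epsilon$ of \S\ref{subsec:AI}--\S\ref{subsec:MI}) to pass $z$ across the unit circle; the special values then drop out of the explicit formula \eqref{eq:anaconti} combined with the difference equation of Lemma~\ref{lem:B}. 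What your route buys is economy in $s$ (the loop integral is entire in $s$) and a very clean residue calculus at $s\in\rint_{\leq 0}$; what the paper's route buys is that no branch of $(-t)^{s-1}$ along a loop and no $\Gamma(1-s)$ bookkeeping are needed, and the explicit closed expression \eqref{eq:anaconti} makes the joint holomorphy on the enlarged domains \eqref{domains}, with the stated cuts $\cut{\cang}$ and $\cutz_{\cang'}$ and the boundary cases of fixed $w\in\cut{\cang}\setminus\rint_{\leq 0}$ or fixed $z\in\cutz_{\cang'}$, immediate to verify. The obstacle you flag (keeping the contour away from the zeros as $z$ varies, with the pinching at $z=1$ forcing a cut ray with $\Re(e^{i\cang'})\geq 0$) is exactly the issue the paper handles in \S\ref{subsec:AI}--\S\ref{subsec:MI}, and your monodromy argument is an adequate substitute.

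Two details to supply when you write this out. In Step 3 the Hankel formula applies directly only after the shift $w\mapsto w+N$ with $\Re(w+N)>0$, so to obtain $-\mathscr{B}_r(z,w)/r$ at the original $w$ you still need the identity $z\,\mathscr{B}_r(z,w+1)-\mathscr{B}_r(z,w)=rw^{r-1}$ (the paper's Lemma~\ref{lem:B}) or an identity-theorem argument in $w$. Also, the theorem is stated for the nonstandard global branch $\cang\leq\arg\lambda<\cang+2\pi$ (whence the factor $e^{-4\pi i\nu s}$ in \eqref{eq:exp}), so the constants in your Mellin and Hankel formulas must be adjusted to that convention for the continued function to agree with \eqref{def:init} on \eqref{initdomains}; this is routine but not automatic.
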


Theorem~\ref{thm:explicit} would be applicable to explicit evaluation of the Lerch zeta function
 at any specific point $(z,\,s,\,w) \in \complex \times \complex\setminus P_z \times \complex\setminus\rint_{\leq 0}$
 by taking sufficiently small $\epsilon>0$ and large $m,\,N$.
It also follows from Theorem~\ref{thm:anaconti} that by glueing the simply-connected sheets
 $\complex\setminus\cutz_{\cang'}$ and $\complex\setminus\cut{\cang}$ with
 $\cang \in \real\setminus2\pi\rint$, $\cang' \in \real$, $\Re(e^{i\cang'}) \geq 0$ together,
 we obtain the single-valued analytic continuation to the universal covering of the multiply-connected region
$\complex\setminus\{1\} \times \complex \times \complex\setminus\rint_{\leq 0}$
 with infinitely many punctures, which corresponds to the result~\cite[Theorem~3.6]{LL3}.
Note that the expression~\eqref{func:anaconti} enables us to evaluate the Lerch zeta function for any $z \in \real_{\geq 1}$,
 which has not been explicitly discussed in the literature.
Also, we obtain the formula for the special value $\Phi(e^{2\pi ia},\,1-r,\,w)$ with $a \in \real$
 for a positive integer $r$, which is originally due to Apostol~\cite{Apo}.
See Remark~\ref{thm:example} for a specific example of Theorem~\ref{thm:anaconti}.

Next we focus on functional equations for $\Phi(z,\,s,\,w)$ with the variable change $z=e^{2\pi ia}$.
In the paper~\cite{Ler}, Lerch proved the functional equation (called Lerch's transformation formula)
\begin{align}\label{eq:Lerch}
\Phi&(e^{2\pi ia},\,1-s,\,w)\\
&=\frac{\Gamma(s)}{(2\pi)^s}
\left\{e^{\pi i(\frac{s}{2}-2aw)}\Phi(e^{-2\pi iw},\,s,\,a)+e^{\pi i(-\frac{s}{2}+2(1-a)w)}\Phi(e^{2\pi iw},\,s,\,1-a)\right\} \notag
\end{align}
for $\Im(a)>0$ or $0<a<1$, $0<w<1$, and $\Re(s)>0$
 (actually proved for all $s \in \complex$ though it was not explicitly mentioned in~\cite{Ler})
using contour integral technic appeared as in Riemann's first proof of the functional equation for the Riemann zeta function.
On the other hand, Apostol~\cite{Apo} proved, according to the line of Riemann's second proof,
 there exists another symmetric functional equation
\begin{align}\label{eq:Apostol}
\Lambda(a,\,1-s,\,w)=2(2\pi)^{-s}\left(\cos\frac{\pi s}{2}\right)\Gamma(s)e^{-2\pi iaw}\Lambda(-w,\,s,\,a)
\end{align}
for $0<a<1$, $0<w<1$ (not explicitly written in~\cite{Apo}), and all $s \in \complex$, where
$$\Lambda(a,\,s,\,w):=\Phi(e^{2\pi ia},\,s,\,w)+e^{-2\pi ia}\Phi(e^{-2\pi ia},\,s,\,1-w).$$
This equation was also found by Weil~\cite[p.57]{EF}.
As a consequence, Apostol showed that
if $0<a<1$ and $0<w<1$ then Lerch's equation~(\ref{eq:Lerch}) is obtained from the equation~(\ref{eq:Apostol}).

In \S3, we show the following theorem using Theorem~\ref{thm:anaconti}.
Let
\begin{align*}
\cutsym{\cang}&:=\{w \in \complex \mid w=n+1+re^{i(\cang+\pi)} \text{ for $r \geq 0,\,n \in \rint_{\geq 0}$}\} \cup \cut{\cang}.
\end{align*}

\begin{theorem}\label{thm:equivalence}
Fix $\cang' \in 2\pi\rint$. Let $U$ be the maximal domain in $\complex\setminus(\cutsym{\cang} \cup (\rint+i\real))$
 containing the interval $(0,\,1)\,(\subset \real)$, and let
\begin{align}\label{domain:equiv}
U \times \complex \times U,
\tag{$D_{{\rm eq}}$}
\end{align}
 be a domain of $(a,\,s,\,w)$.
Then, Lerch's equation~\eqref{eq:Lerch} holds on \eqref{domain:equiv}
 if and only if Apostol's equation~\eqref{eq:Apostol} holds on \eqref{domain:equiv}.
\end{theorem}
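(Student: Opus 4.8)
The plan is to argue that both functional equations, once the variables $(a,s,w)$ are allowed to range over the connected open set $(D_{\mathrm{eq}})$, are identities between functions that are holomorphic there, so it suffices to establish the equivalence on a single open subset where both sides are known to be well-defined and classically related. The point of Theorem~\ref{thm:anaconti} is exactly that $\Phi(e^{2\pi i a}, s, w)$ extends holomorphically in all the relevant variables as long as $e^{2\pi i a} \notin \cutz_{\cang'}$ and $w \notin \cut{\cang}$ (with $\cang' \in 2\pi\rint$ so that $\cutz_{\cang'}$ is the ray $[1,\infty)$ and $e^{2\pi i a} = 1$ exactly when $a \in \rint$, whence the condition $a \notin \rint + i\real$); the cut $\cutsym{\cang}$ is the union of the two translated $\ell$-type cuts needed so that all four Lerch functions $\Phi(e^{\pm 2\pi i a}, s, \bullet)$, $\Phi(e^{\pm 2\pi i w}, s, \bullet)$ appearing in \eqref{eq:Lerch}, as well as $\Phi(e^{\pm 2\pi i w}, s, a)$ in \eqref{eq:Apostol}, are simultaneously holomorphic and single-valued on $(D_{\mathrm{eq}})$.

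First I would verify carefully that each of the expressions $\Lambda(a,1-s,w)$, $\Lambda(-w,s,a)$, $\Phi(e^{2\pi i a},1-s,w)$, $\Phi(e^{\pm 2\pi i w}, s, 1-a)$, and $\Phi(e^{\pm 2\pi i w}, s, a)$, as well as the entire prefactors $\Gamma(s)(2\pi)^{-s}e^{\cdots}$ and $\cos(\pi s/2)$, define holomorphic functions of $(a,s,w)$ on $(D_{\mathrm{eq}})$. This is where Theorem~\ref{thm:anaconti}(1),(3) is used: for fixed admissible $a$ with $e^{2\pi i a} \ne 1$ the function $\Phi(e^{2\pi i a}, s, w)$ is holomorphic in $(s,w)$ on $\complex \times (\complex \setminus \cut{\cang})$, and one checks that $w \in U$ forces $w$, $1-w$, $1-a$, $a$ all off the relevant cuts, and similarly $w \notin \rint$ keeps $e^{2\pi i w} \ne 1$ so there is no pole in $s$. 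Holomorphy jointly in $(a,s,w)$ follows from holomorphy in each variable separately (the paper's convention of "holomorphic on $D$") together with the explicit analytic-continuation formulas from the proof of Theorem~\ref{thm:anaconti}.

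Next, both \eqref{eq:Lerch} and \eqref{eq:Apostol} are classically known to hold on the real box $\{0 < a < 1,\ 0 < w < 1,\ \Re(s) > 1\}$ (or on $\{0<a<1,\,0<w<1\}$ with $s$ in a suitable strip) — Lerch's and Apostol's original ranges — and that box is a nonempty open subset of $(D_{\mathrm{eq}})$, which is connected by construction ($U$ is a domain containing $(0,1)$ and $\complex$ is a domain). Apostol already showed that on this box \eqref{eq:Lerch} $\Leftrightarrow$ \eqref{eq:Apostol}; in fact the implication in each direction is a purely algebraic manipulation of the two identities (splitting $\Lambda$ into its two $\Phi$-summands, matching the prefactors via $2\cos(\pi s/2) = e^{\pi i s/2} + e^{-\pi i s/2}$, and relabeling $w \mapsto 1-w$, $a \mapsto a$), so that each of Lerch's and Apostol's equations is equivalent to the same "split" pair of scalar identities. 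Therefore on the real box all of: Lerch's equation, Apostol's equation, and the split pair, hold simultaneously. I would then invoke the identity theorem for holomorphic functions of several variables: a holomorphic function on the connected open set $(D_{\mathrm{eq}})$ that vanishes on the nonempty open real box vanishes identically. Applying this to the difference of the two sides of \eqref{eq:Lerch}, and separately to \eqref{eq:Apostol}, shows each holds on all of $(D_{\mathrm{eq}})$; and since on the box they are equivalent, the equivalence propagates — more precisely, "Lerch holds on $(D_{\mathrm{eq}})$" and "Apostol holds on $(D_{\mathrm{eq}})$" are each equivalent to "the common split pair holds on $(D_{\mathrm{eq}})$", which gives the biconditional.

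The main obstacle is the bookkeeping in the first step: one must pin down exactly why the particular cut $\cutsym{\cang}$ and the slits $\rint + i\real$ are the right ones, i.e. that $w \in U$ (and hence also $1 - w \in U'$ for the appropriate reflected region, and $a, 1-a$ staying admissible) guarantees that \emph{every} Lerch factor on \emph{both} sides of \emph{both} equations lies in a domain of type $(D_1)(1)$ or $(D_1)(3)$ from Theorem~\ref{thm:anaconti}, with consistent branch choices for $\log$, so that there is genuinely a single holomorphic function on $(D_{\mathrm{eq}})$ to which the identity theorem applies. The need to have $\cang' \in 2\pi\rint$ and to excise $\rint + i\real$ rather than just $\rint$ is precisely to make the arguments $e^{2\pi i a}$, $e^{2\pi i w}$ avoid the branch cut $\cutz_{\cang'} = [1,\infty)$ for all $a, w \in U$, not merely for real ones; spelling this out — together with checking $\cutsym{\cang}$ is closed under the reflection $w \mapsto 1-w$ up to the translation bookkeeping, so the region is symmetric enough for $\Lambda$ to make sense — is the real content. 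Once that is in place, the identity-theorem argument and Apostol's algebraic equivalence are routine.
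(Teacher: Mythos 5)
Your overall strategy --- show that every $\Phi$-factor in both equations is holomorphic on \eqref{domain:equiv}, verify both identities on the classical real range, and propagate by the identity theorem --- does prove the literal biconditional, but only by making both sides of it true; that is exactly the content of Proposition~\ref{prop:LA}, which the paper proves separately, and it is not the route the paper takes for Theorem~\ref{thm:equivalence}. The paper's proof is a direct derivation that never invokes the validity of the target equation: assuming \eqref{eq:Apostol} on \eqref{domain:equiv}, it differentiates in $a$ using the differential--difference relations of Lemma~\ref{lem:dif}, obtains the antisymmetric companion equation \eqref{eq:Apostol-} for $\Lambda^-$, and adds it to \eqref{eq:Apostol} to recover \eqref{eq:Lerch}; conversely, assuming \eqref{eq:Lerch}, it applies the substitution $(a,\,s,\,w)\mapsto(1-a,\,s,\,1-w)$ (under which \eqref{domain:equiv} is invariant) and adds, which is indeed algebraic. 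So the two arguments buy different things: yours essentially re-proves Proposition~\ref{prop:LA} and then gets the biconditional vacuously, while the paper's shows each equation is \emph{deducible} from the other on \eqref{domain:equiv}, which is the intended meaning of ``essentially the same''.

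The genuine flaw in your write-up is the claim that the equivalence on the box is ``a purely algebraic manipulation'' in each direction, with both equations equivalent to a common ``split pair'' of scalar identities. That fails in the direction \eqref{eq:Apostol} $\Rightarrow$ \eqref{eq:Lerch}: Apostol's equation constrains only the symmetric combination $\Lambda(a,\,s,\,w)=\Phi(e^{2\pi ia},\,s,\,w)+e^{-2\pi ia}\Phi(e^{-2\pi ia},\,s,\,1-w)$, and no algebraic rearrangement splits it into the two separate Lerch identities; one needs the antisymmetric equation \eqref{eq:Apostol-} as an additional input, and both Apostol and the paper obtain it analytically, by differentiation in $a$ (this is precisely why Lemma~\ref{lem:dif} exists). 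If you keep your route, drop the split-pair reasoning and state plainly that the biconditional holds because both equations hold (i.e.\ Proposition~\ref{prop:LA}); also note that the classical set $(0,1)\times\{\Re(s)>1\}\times(0,1)$ is not open in $\complex^3$, so the propagation step should be the iterated one-variable identity theorem (in $a$, then $s$, then $w$), not an open-subset identity theorem.
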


In fact, Lerch's and Apostol's equations hold on \eqref{domain:equiv}, respectively (Proposition~\ref{prop:LA}),
 and hence these are essentially the same equation on \eqref{domain:equiv}.

\section{Analytic continuation of $\Phi$ and its evaluation}

In this section, we prove Theorem~\ref{thm:explicit}.

\subsection{Apostol's generating function}

Apostol~\cite{Apo} introduced the sequence of rational functions
 $\mathscr{B}_r(z,\,w)$ $(r \in \rint_{\geq 0})$ in $z$, $w$ by means of the generating function
\begin{align}\label{def:Apo}
\sum_{r=0}^{\infty} \frac{\mathscr{B}_r(z,\,w)}{r!}t^r=\frac{te^{tw}}{e^tz-1}.
\end{align}
The specialization $z=1$ together with the variable $w$ in the definition yields the definition of the Bernoulli polynomial in $w$,
 and for $r \geq 1$, there is the recurrence relation
\begin{align}\label{recurrence_Ber}
\mathscr{B}_r(1,\,w)=\frac{1}{r+1}\sum_{k=0}^{r-1} (-1)^{r-k}\binom{r+1}{k} \mathscr{B}_k(1,\,w) \left((w-1)^{r-k+1}-w^{r-k+1}\right).
\end{align}
If we regard both $z$ and $w$ as variables, then
$\mathscr{B}_0(z,\,w)=0$, $\mathscr{B}_1(z,\,w)=1/(z-1)$, and for $r \geq 2$, there is the recurrence relation
\begin{align}\label{recurrence_Apo}
\mathscr{B}_r(z,\,w)=\frac{1}{z-1}\sum_{k=0}^{r-1} (-1)^{r-k-1}\binom{r}{k} \mathscr{B}_k(z,\,w) \left(z(w-1)^{r-k}-w^{r-k}\right).
\end{align}
Thus $\mathscr{B}_r(z,\,w) \in \rat[1/(z-1),\,w]$, having pole only at $z=1$ for $r \geq 1$,
 and holomorphic for $(z,\,w) \in \complex\setminus\{1\} \times \complex$ as a function in two complex variables $z,\,w$.
In the rest of the paper,
if the value of $z$ has been specified, then we regard the definition~(\ref{def:Apo})
 as that under the specialization of $z$, in order to simplify notation in the case $z=1$.
In this sense, $\mathscr{B}_r(1,\,w)$ is the Bernoulli polynomial.

We will apply the following lemma to the proof of Theorem~\ref{thm:explicit} (Proposition~\ref{prop:J}).

\begin{lemma}\label{lem:Apo}
For any $r \in \rint_{\geq 0}$, we have
\begin{align*}
|\mathscr{B}_r(z,\,w)| \leq
\begin{cases}
2^r \cdot r!\,(|w|+1)^r & \text{ if $z=1$},\\
\dfrac{2^{r-1} \cdot r!}{|z-1|^r}(|z|+1)^{r-1}(|w|+1)^{r-1} & \text{ if $z \ne 1$}.
\end{cases}
\end{align*}
\end{lemma}

\begin{proof}
By induction. The case $r=0$ is clear from
$$\mathscr{B}_0(1,\,w)=1, \quad \mathscr{B}_0(z,\,w)=0 \quad (z \ne 1).$$
Consider the case $r \geq 1$.
Assume that the statement of the lemma holds for any $k \in \rint_{\geq 0}$ with $k<r$.
Then, by the recurrence relation~\eqref{recurrence_Ber} we have
\begin{align*}
|\mathscr{B}_r(1,\,w)|
&\leq \frac{1}{r+1}\sum_{k=0}^{r-1} \binom{r+1}{k} |\mathscr{B}_k(1,\,w)| \sum_{l=0}^{r-k} \binom{r-k+1}{l} |w|^l\\
&\leq \frac{1}{r+1}\sum_{k=0}^{r-1} \binom{r+1}{k} |\mathscr{B}_k(1,\,w)| (r-k+1)(|w|+1)^{r-k}\\
&\leq r!\,(|w|+1)^r\sum_{k=0}^{r-1} \frac{2^k}{(r-k)!}\\
&\leq r!\,(|w|+1)^r\sum_{k=0}^{r-1} 2^k\\
&\leq 2^r \cdot r!\,(|w|+1)^r.
\end{align*}
Also, using the recurrence relation~\eqref{recurrence_Apo} yields
\begin{align*}
|\mathscr{B}_r(z,\,w)|
&\leq \frac{1}{|z-1|}\sum_{k=0}^{r-1} \binom{r}{k} |\mathscr{B}_k(z,\,w)| (|z|+1)(|w|+1)^{r-k}\\
&\leq \frac{1}{|z-1|}\sum_{k=0}^{r-1} \binom{r}{k} \dfrac{2^{k-1} \cdot k!}{|z-1|^k} (|z|+1)^k(|w|+1)^{r-1}\\
&\leq \frac{1}{|z-1|^r}\sum_{k=0}^{r-1} \binom{r}{k} 2^{k-1} \cdot k!\,(|z|+1)^{r-1}(|w|+1)^{r-1}\\
&\leq \frac{r!}{|z-1|^r}(|z|+1)^{r-1}(|w|+1)^{r-1}\sum_{k=0}^{r-1} \frac{2^{k-1}}{(r-k)!}\\
&\leq \frac{2^{r-1} \cdot r!}{|z-1|^r}(|z|+1)^{r-1}(|w|+1)^{r-1}.
\end{align*}
\end{proof}

\subsection{Proof of Theorem~\ref{thm:explicit}}

Let $N$ be any non-negative integer, and let
\begin{align*}
D_z&:=\left\{z \in \complex \mid |z|<1\right\},\\
D_s&:=\left\{s \in \complex \mid \Re(s)>1\right\},\\
\wt{D}_w&:=\left\{w \in \complex \mid \Re(w)>-N\right\}.
\end{align*}
We start with six simply-connected domains
\begin{align}\label{basedomains}
\begin{array}{l}
\dom{1} D_z \times D_s \times \wt{D}_w\setminus\cut{\cang},\\
\dom{2} D_z \times D_s \text{ with fixed $w \in (\cut{\cang}\setminus\rint_{\leq 0}) \cap \wt{D}_w$},\\
\dom{1'} D_s \times \wt{D}_w\setminus\cut{\cang} \text{ with fixed $z \ne 1,\,|z|=1$},\\
\dom{3} D_s \times \wt{D}_w\setminus\cut{\cang} \text{ with $z=1$},\\
\dom{2'} D_s \text{ with fixed $z \ne 1,\,|z|=1$, $w \in (\cut{\cang}\setminus\rint_{\leq 0}) \cap \wt{D}_w$},\\
\dom{4} D_s \text{ with $z=1$ and fixed $w \in (\cut{\cang}\setminus\rint_{\leq 0}) \cap \wt{D}_w$}.
\end{array}\tag{$D$}
\end{align}
For each $j \in \{1,\,2,\,1',\,3,\,2',\,4\}$, since \eqref{basedomains}$(j) \subset$ \eqref{initdomains}$(j)$,
 the infinite series~\eqref{def:init}
 defines a single-valued function holomorphic on \eqref{basedomains}$(j)$
 in three variables for $j=1$, two variables for $j \in \{2,\,1',\,3\}$,
 and one variable for $j \in \{2',\,4\}$.
We regard the real interval $[a,\,b]\,(\subset \real)$ as the oriented line segment on $\real$ from $a$ to $b$,
 also $[a,\,\infty)$ as the oriented half line on $\real$ from $a$ to $\infty$, and so on.
On each domain of~(\ref{basedomains}) with restricted $w \in \real_{>0}$, consider the typical integral representation
\begin{align*}
\frac{z^n}{(n+w)^s}\Gamma(s)
&=e^{-4\pi i \nu s}\int_{[0,\,\infty)} e^{-t} z^n \left(\frac{t}{n+w}\right)^s \frac{dt}{t}, \quad n \geq N,
\end{align*}
where $\Gamma(s)$ denotes the gamma function and $(\frac{t}{n+w})^s=e^{s\log\frac{t}{n+w}}$.
Since $n+w \in \real_{>0}$, the substitution $t/(n+w) \to t$ yields the analytic continuation
\begin{align*}
\frac{z^n}{(n+w)^s}\Gamma(s)
&=e^{-4\pi i \nu s}\int_{[0,\,\infty)} e^{-t(n+w)} z^n t^{s-1} dt, \quad n \geq N,
\end{align*}
 where the right hand side of this equality is holomorphic for all
 $(z,\,s,\,w) \in \complex \times D_s \times \wt{D}_w$.
Multiplying $\Gamma(s)^{-1}$, taking the sum with respect to $n$, and using the condition of $D_z$, $D_s$, $\wt{D}_w$, we have
\begin{align}\label{eq:exp}
\Phi(z,\,s,\,w)
&=\sum_{n=0}^{N-1}\frac{z^n}{(n+w)^s}+\frac{1}{e^{4\pi i \nu s}\Gamma(s)}\int_{[0,\,\infty)} \left(\sum_{n=N}^{\infty} e^{-t(n+w)} z^n\right)t^{s-1} dt \notag\\
&=\sum_{n=0}^{N-1}\frac{z^n}{(n+w)^s}
+\frac{z^N}{e^{4\pi i \nu s}\Gamma(s)}\int_{[0,\,\infty)} \frac{e^{-t(N+w)}}{1-e^{-t}z}t^{s-1} dt \notag\\
&=\sum_{n=0}^{N-1}\frac{z^n}{(n+w)^s} +\frac{z^N}{e^{4\pi i \nu s}\Gamma(s)}(I_N+J_{N,\,m}+K_{N,\,m})
\end{align}
 on each domain of \eqref{basedomains}, where $m$ denotes any non-negative integer, and
\begin{align*}
I_N(z,\,s,\,w)&:=\int_{[\epsilon,\,\infty)} \phi_N(t)t^{s-2} dt,\\
J_{N,\,m}(z,\,s,\,w)&:=\int_{[0,\,\epsilon]} \left(\phi_N(t)-\phi_{N,\,m}(t)\right)t^{s-2} dt,\\
K_{N,\,m}(z,\,s,\,w)&:=\int_{[0,\,\epsilon]} \phi_{N,\,m}(t)t^{s-2} dt,\\
\phi_N(z,\,w,\,t)&:=\frac{te^{-t(N+w)}}{1-e^{-t}z} \quad (=:\phi_N(t)),\\
\phi_{N,\,m}(z,\,w,\,t)&:=\sum_{r=0}^m \frac{\mathscr{B}_r(z,\,N+w)}{r!}(-1)^r t^r \quad (=:\phi_{N,\,m}(t)).
\end{align*}
Here $\epsilon$ denotes any positive real, and $\phi_{N,\,m}(z,\,w,\,t)$ is a rational function
 in either $\rat[w,\,t]$ or $\rat[1/(z-1),\,w,\,t]$ according to $z=1$ or not
(Recall that $\mathscr{B}_r(1,\,N+w)$ stands for the Bernoulli polynomial, to ease the notation).

On the domains~\eqref{basedomains}, we have
\begin{align*}
K_{N,\,m}(z,\,s,\,w)
&=\int_{[0,\,\epsilon]} \sum_{r=0}^m \frac{\mathscr{B}_r(z,\,N+w)}{r!}(-1)^r t^{r+s-2} dt\\
&=\sum_{r=0}^m \frac{\mathscr{B}_r(z,\,N+w)}{r!}\frac{(-1)^r\epsilon^{s+r-1}}{s+r-1},
\end{align*}
 which is a meromorphic function in $\rat[\log\epsilon,\,1/(z-1),\,w,\,1/((s-1)s(s+1)\cdots(s+m-1))][\![s]\!]$ when $z \ne 1$,
 or $\rat[\log\epsilon,\,w,\,1/((s-1)s(s+1)\cdots(s+m-1))][\![s]\!]$ when $z=1$, having only simple poles,
 and hence the function $K_{N,\,m}/\Gamma(s)$ appearing in \eqref{eq:exp}
 is holomorphic for any $(z,\,s,\,w) \in \complex \setminus \{1\} \times \complex \times \complex$ except for $s=1$
 by the fact that the entire function $1/\Gamma(s)$ has simple zeros only at $s \in \rint_{\leq 0}$.
The (non-)holomorphy at $s=1$ follows from the fact that
 $\mathscr{B}_0(z,\,N+w)$ is equal to $0$ when $z \ne 1$, or $1$ when $z=1$,
 which leads us to the definition of $P_z$.

Let
\begin{align*}
\wt{D}_z&:=\left\{z \in \complex \mid z \notin [1,\,e^{\epsilon}]\right\},\\
\wt{D}_s&:=\left\{s \in \complex \mid \Re(s)>-m\right\}.
\end{align*}
Now we take a real $\epsilon>0$ satifying the condition~\eqref{ass1:epsilon}.
Then, by the results in \S\ref{subsec:hol}, \S\ref{subsec:AI}, \S\ref{subsec:MI},
 there exists a single-valued holomorphic function $\wt{I}_N$
 on the domains~\eqref{domains_wIm_single} giving analytic continuations of $I_N$ on each domain of \eqref{basedomains}.
Also, the function $J_{N,\,m}$ is single-valued and holomorphic on \eqref{domains_wIm_single}.

Summarize the discussion above.
For any $m,\,N \in \rint_{\geq 0}$, $\epsilon \in \real_{>0}$ under the condition~\eqref{ass1:epsilon}, we have the expression
\begin{align}\label{eq:anaconti}
\Phi(z,\,s,\,w)
=&\sum_{n=0}^{N-1}\frac{z^n}{(n+w)^s}
+\frac{z^N}{e^{4\pi i \nu s}\Gamma(s)}\sum_{r=0}^m \frac{\mathscr{B}_r(z,\,N+w)}{r!}\frac{(-1)^r\epsilon^{s+r-1}}{s+r-1}\\
&+\frac{z^N}{e^{4\pi i \nu s}\Gamma(s)}\wt{I}_N(z,\,s,\,w)+\frac{z^N}{e^{4\pi i \nu s}\Gamma(s)}J_{N,\,m}(z,\,s,\,w) \notag
\end{align}
 on the domains~(\ref{basedomains}), whose right hand side is both single-valued and holomorphic on four domains
\begin{align}
\begin{array}{l}
\dom{1} \wt{D}_z\setminus\cutz_{\cang'} \times \wt{D}_s \times \wt{D}_w\setminus\cut{\cang},\\
\dom{2} \wt{D}_z\setminus\cutz_{\cang'}\times \wt{D}_s \text{ with fixed $w \in (\cut{\cang}\setminus\rint_{\leq 0}) \cap \wt{D}_w$},\\
\dom{3} \wt{D}_s\setminus P_z \times \wt{D}_w\setminus\cut{\cang} \text{ with fixed $z \in \cutz_{\cang'} \setminus (1,\,e^{\epsilon}]$},\\
\dom{4} \wt{D}_s\setminus P_z \text{ with fixed $z \in \cutz_{\cang'} \setminus (1,\,e^{\epsilon}],\,w \in (\cut{\cang}\setminus\rint_{\leq 0}) \cap \wt{D}_w$}.
\end{array}\tag{$\wt{D}$}
\end{align}
Then
\begin{align*}
\rnums{1} & \eqref{basedomains}(j) \subset \eqref{domains}(j) \text{ for $j \in \{1,\,2,\,3,\,4\}$},\\
\rnums{2} & \eqref{basedomains}(j') \subset \eqref{domains}(j) \text{ for $j \in \{1,\,2\}$}.
\end{align*}
Since \eqref{basedomains}$(j) \subset$ \eqref{initdomains}$(j)$ for each $j \in \{1,\,2,\,1',\,3,\,2',\,4\}$,
 the right hand side of \eqref{eq:anaconti} gives analytic continuations of $\Phi$ ({\it i.e.} \eqref{def:init}) on \eqref{initdomains}
 to \eqref{domains} in accordance with \rnum{1}), \rnum{2}).
Here we need to separate the domains~\eqref{domains_wIm_single} (with $P_z$)
 into four domains~\eqref{domains} because the function $(n+w)^s=e^{s\log(n+w)}$ in $w$
 is discontinuous on the half line $-n+e^{i\cang}\real_{\geq 0}\,(\subset \cut{\cang})$ when $s \in \complex\setminus\rint$.
The evaluation of $\wt{I}_N$, $J_{N,\,m}$ follows from Proposition~\ref{prop:I}, \ref{prop:J} in \S\ref{subsec:evaluation}.
This proves Theorem~\ref{thm:explicit}.

\begin{remark}
It turns out that the multivaluedness of the Lerch zeta function $\Phi$ on the domain $\wt{D}_z \times \wt{D}_s \times \wt{D}_w$
 arises from $(n+w)^s=e^{s\log(n+w)}$ in the terms
$$\sum_{n=0}^{N-1}\frac{z^n}{(n+w)^s}$$
 of \eqref{eq:exp} and the (non-trivial multivalued) functions $\wt{I}_N$, $J_{N,\,m}$ (\S\ref{subsec:MI}).
\end{remark}

\subsection{Holomorphy of $J_{N,\,m}(z,\,s,\,w)$}\label{subsec:hol}

Consider the function
\begin{align*}
J_{N,\,m}(z,\,s,\,w)=\int_{[0,\,\epsilon]} \left(\phi_N(t)-\phi_{N,\,m}(t)\right)t^{s-2} dt,
\end{align*}
where $t^{s-2}=e^{(s-2)\log t}$ with $\cang \leq \arg t < \cang+2\pi$ and $\cang \in \real\setminus2\pi\rint$.
Especially $\log t$ is holomorphic on the path $[0,\,\epsilon]$.

Since the function $\phi_N(t)$ is holomorphic
 for $(z,\,w,\,t)$ in any compact subset of $\wt{D}_z \times \wt{D}_w \times [0,\,\epsilon]$
 (resp. for all $(w,\,t) \in \wt{D}_w \times [0,\,\epsilon]$ with $z=1$),
 (Note that $\phi_N$ in two variables $z,\,t$ is discontinuous on those $(z,\,t)$ with $z=e^t$,
 in particular at $(z,\,t)=(e^t,\,t)$ for any $t \in [0,\,\epsilon]$),
 by the definition of $\mathscr{B}_r(z,\,N+w)$ we can expand $\phi_N$ to the Taylor series
$$\phi_N(t)=\sum_{r=0}^{\infty} \frac{\mathscr{B}_r(z,\,N+w)}{r!}(-1)^r t^r$$
around $t=0$, and hence $|\phi_N(t)-\phi_{N,\,m}(t)|=O(t^{m+1})$ as $t \to 0$
 on any compact subset $T \subset \wt{D}_z \times \wt{D}_s \times \wt{D}_w$
 (resp. $T \subset \wt{D}_s \times \wt{D}_w$ with $z=1$).
This implies that the absolute value of the integrand of $J_{N,\,m}$
 is equal to $O(t^{m+\Re(s)-1})$ as $t \to 0$ on the set $T$.
Thus the integral $J_{N,\,m}$ converges and is holomorphic
 for all $(z,\,s,\,w) \in \wt{D}_z \times \wt{D}_s \times \wt{D}_w$
 (resp. for all $(s,\,w) \in \wt{D}_s \times \wt{D}_w$ with $z=1$).

\subsection{Analytic continuation of $I_N(z,\,s,\,w)$ to $\complex \setminus \{e^{\epsilon}\} \times \complex \times \complex$}\label{subsec:AI}

Let $0<\epsilon<\alpha$.
For any oriented path $L \subset \complex$ from $\epsilon$ to $\alpha$, let
\begin{align*}
I_N^L(z,\,s,\,w):=\int_{L} \phi_N(t)t^{s-2} dt,
\end{align*}
where $t^{s-2}=e^{(s-2)\log t}$ with $\cang \leq \arg t < \cang+2\pi$ and $\cang \in \real\setminus2\pi\rint$.
One has $I_N^{[\epsilon,\,\alpha]} \to I_N$ as $\alpha \to \infty$.
The function $\log t$ is holomorphic for any $t \in \complex\setminus e^{i\cang}\real_{\geq 0}$.
Assume
\begin{align*}
L \subset H_t:=\complex\setminus e^{i\cang}\real_{\geq 0}.
\end{align*}
Since the function $\phi_N(t)$ is holomorphic
 for $(z,\,w,\,t)$ in any compact subset of $\complex \setminus e^L \times \complex \times L$
 (resp. for all $(w,\,t) \in \complex \times L$ with $z=1$), where $e^L=\{e^t \mid t \in L\}$
 (Note that $\phi_N$ in two variables $z,\,t$ is discontinuous on those $(z,\,t)$ with $z=e^t$,
 in particular at $(z,\,t)=(e^t,\,t)$ for any $t \in L$),
the integral $I_N^L$ converges and is holomorphic
 for all $(z,\,s,\,w) \in \complex \setminus e^L \times \complex \times \complex$
 (resp. for all $(s,\,w) \in \complex \times \complex$ with $z=1$).

In the case $\alpha=\infty$,
if $L$ satisfies the condition
\begin{align*}
\bullet\, &\inf\left\{\Re(t) \in \real \mid t \in L\right\}>-\infty\\
\bullet\, &^{\exists}\gamma_0 \in \real \text{ such that $\Im(t)$ is constant for any $t \in L$ with $\Re(t)>\gamma_0$}
\end{align*}
 then the integral $I_N^L$ still converges
 because for any compact subset $T$ of $\{(z,\,s,\,w) \in \complex \setminus e^L \times \complex \times \complex\}$
 there exists a positive constant $C \in \real$ depending on $N,\,L,\,T$ such that
$$|\phi_N(t)t^{s-2}|
=\frac{e^{-\Re(t)(N+\Re(w))+\Im(t)\Im(w)}|t|^{\Re(s)+1}e^{-\Im(s)\arg\,t}}{|1-e^{-\Re(t)-i\Im(t)}z|}|t|^{-2} \leq \frac{C}{|t|^2} \quad \text{ for $t \in L$}.$$
Especially, $I_N^{[\epsilon,\,\infty)}=I_N$ converges on
 the domain $\complex \setminus [e^{\epsilon},\,\infty) \times \complex \times \complex$
 containing any domain of \eqref{basedomains}.

Now we take a real $\epsilon>0$ satisfying
\begin{align}
\Re(e^{i\cang})<\Re(e^{i\arg(1+i\epsilon)})\,(=1/\sqrt{1+\epsilon^2}), \tag{$\epsilon$\,1}
\end{align}
 which is possible by the assumption $\cang \in \real\setminus2\pi\rint$ implying $\Re(e^{i\cang})<1$.
Then we show that the function
\begin{align*}
\wt{I}_N(z,\,s,\,w):=
\begin{cases}
I_N(z,\,s,\,w) & \text{ on $\complex \setminus [e^{\epsilon},\,\infty) \times \complex \times \complex$}\\
I_N^{L_{\epsilon}^{\infty}}(z,\,s,\,w) & \text{ on $(e^{\epsilon},\,\infty) \times \complex \times \complex$}
\end{cases}
\end{align*}
is holomorphic on the domain $\complex \setminus \{e^{\epsilon}\} \times \complex \times \complex$,
 where $L_{\epsilon}^{\alpha}\,(\subset H_t)$ denotes the oriented line segment in the complex $t$-plane connecting the four complex points
 $\epsilon$, $\epsilon(1+i\epsilon)$, $\alpha+i\epsilon^2$, $\alpha$, in this order (See Figure~2).
By the discussion above, $I_N\,(=I_N^{[\epsilon,\,\infty)})$ (resp. $I_N^{L_{\epsilon}^{\infty}}$) converges and is holomorphic on
 $\complex \setminus [e^{\epsilon},\,\infty) \times \complex \times \complex$
 (resp. $\complex \setminus e^{L_{\epsilon}^{\infty}} \times \complex \times \complex$),
 where $(e^{\epsilon},\,\infty) \subset \complex \setminus e^{L_{\epsilon}^{\infty}}$.
\begin{figure}[ht]
\centering
\includegraphics{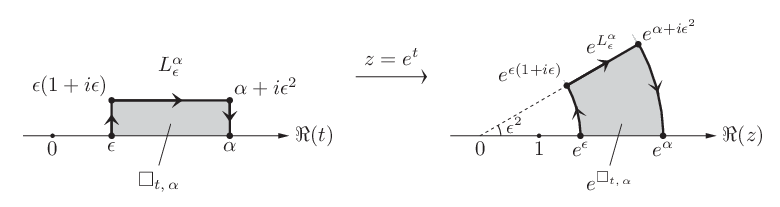}
\caption{\small the paths $L_{\epsilon}^{\alpha},\,e^{L_{\epsilon}^{\alpha}}$ and the regions $\square_{t,\,\alpha},\,e^{\square_{t,\,\alpha}}$}
\end{figure}
Let
\begin{align*}
\square_{t,\,\alpha}&:=\left\{t \in \complex \midflex \epsilon \leq \Re(t) \leq \alpha,\,0 \leq \Im(t) \leq \epsilon^2\right\},\\
R_z&:=\left\{re^{i\theta} \in \complex \midflex r>0,\,\frac{\pi}{2}<\theta<\frac{3\pi}{2}\right\}.
\end{align*}
Then $\square_{t,\,\alpha} \subset H_t$ (by the condition~\eqref{ass1:epsilon}),
 $e^{\square_{t,\,\alpha}} \cap \closure{R_z}=\emptyset$, where $\closure{R_z}$ denotes the closure of $R_z$,
 and hence the integrand of $I_N^{L_{\epsilon}^{\alpha}}$ is holomorphic for any $(z,\,s,\,w,\,t) \in R_z \times \complex \times \complex \times \square_{t,\,\alpha}$.
Thus, by Cauchy-Goursat theorem,
 we have $I_N^{L_{\epsilon}^{\alpha}}=I_N^{[\epsilon,\,\alpha]}$ (and hence $I_N^{L_{\epsilon}^{\infty}}=I_N$)
 on the domain $R_z \times \complex \times \complex$.
Therefore, since $R_z \subset (\complex \setminus [e^{\epsilon},\,\infty)) \cap (\complex \setminus e^{L_{\epsilon}^{\infty}})$,
 $\wt{I}_N$ is holomorphic on $\complex \setminus \{e^{\epsilon}\} \times \complex \times \complex$ by analytic continuation.

\subsection{Multivaluedness of $\wt{I}_N,\,J_{N,\,m}$ on $\wt{D}_z \times \wt{D}_s \times \wt{D}_w$}\label{subsec:MI}

When a complex $z \in \complex$ moves continuously on the multiply-connected domain $\complex \setminus \{1\}$ and returns to original position,
 the oriented line segment $e^{-t}z\,(t \in [0,\,\infty))$ on $\complex$ could pass through the point $1 \in \real$
 and returns to the original position.
This implies that a non-trivial multivaluedness of the functions $\wt{I}_N$, $J_{N,\,m}$ on the domain
 $\wt{D}_z \times \wt{D}_s \times \wt{D}_w$ in the variable $z$ occurs in a loop around the point $1$,
 because the function $\phi_N$ has simple poles at $t \in \complex$ with $e^t=z$ for any fixed $z$ in $\complex \setminus \{1\}$.
Therefore, in order to make both $\wt{I}_N$ and $J_{N,\,m}$ single-valued for $z \in \wt{D}_z$, we need a cut
$$\cutz_{\cang'}=\{z \in \complex \mid z-1=re^{i\cang'} \text{ for $r \geq 0$}\} \quad \text{ for some $\cang' \in \real$}$$
 in the $z$-plane (See Figure~3).
\begin{figure}[ht]
\centering
\includegraphics{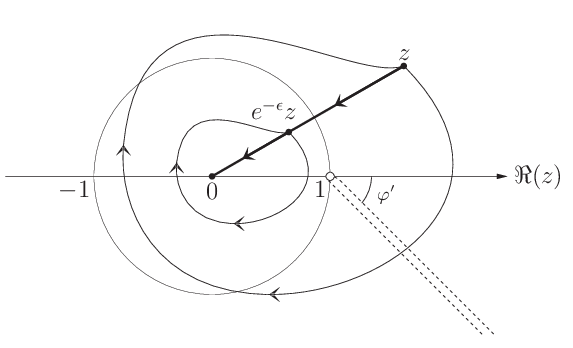}
\caption{\small A movement of the line segment $e^{-t}z\,(t \in [0,\,\epsilon] \cup [\epsilon,\,\infty))$}
\end{figure}
\noindent
Since the infinite series~(\ref{def:init}) is no longer holomorphic on the initial domain $D_z$ when $\Re(e^{i\cang'})<0$,
 the condition $\Re(e^{i\cang'}) \geq 0$ is necessary.
Thus, we obtain single-valued holomorphic functions $\wt{I}_N$, $J_{N,\,m}$ on two domains
\begin{align}\label{domains_wIm_single}
\begin{array}{l}
\dom{1} \wt{D}_z \setminus \cutz_{\cang'} \times \wt{D}_s \times \wt{D}_w,\\
\dom{3} \wt{D}_s \times \wt{D}_w \text{ with fixed $z \in \cutz_{\cang'} \setminus (1,\,e^{\epsilon}]$ $(=(\cutz_{\cang'} \cap \wt{D}_z) \cup \{1\})$},
\end{array}
\end{align}
 giving single-valued analytic continuation of $I_N$, $J_{N,\,m}$ on the domains~\eqref{basedomains}
 in accordance with
\begin{align*}
\rnums{1} & \eqref{basedomains}(j) \subset \eqref{domains_wIm_single}(1) \text{ for $j \in \{1,\,2,\,1',\,2'\}$},\\
\rnums{2} & \eqref{basedomains}(j) \subset \eqref{domains_wIm_single}(3) \text{ for $j \in \{3,\,4\}$},\\
\rnums{3} & \wt{I}_N=I_N \text{ on each domain of \eqref{basedomains}}.
\end{align*}

\subsection{Evaluation of $\wt{I}_N$, $J_{N,\,m}$}\label{subsec:evaluation}

Let
\begin{align}\label{def:ABC}
A_{\epsilon}(z)&:=
\begin{cases}
1 & \text{if $\Re(z) \leq 0$},\\
\left|1-\Re(z)\dfrac{z}{|z|^2}\right| & \text{if $0<\frac{\Re(z)}{|z|^2} \leq e^{-\epsilon}$},\\
\left|1-e^{-\epsilon}z\right| & \text{if $e^{-\epsilon}<\frac{\Re(z)}{|z|^2}$},
\end{cases} \notag\\
B_{\epsilon}(z)&:=\min\left\{e^{-\epsilon}z-1,\,|\sin\epsilon^2|\right\},\\
C_{\epsilon}(s,\,w)&:=\sqrt{2}^{\Re(s)+1}\exp(\epsilon^2|\Im(w)|+|\Im(s)|(\tan^{-1}\!\epsilon+2\pi|\nu|)), \notag\\
E_{\epsilon,\,N}(s,\,w)&:=
\begin{cases}
\left(\dfrac{\Re(s)+1}{e(\Re(w)+N)}\right)^{\Re(s)+1} & \text{if $\frac{\Re(s)+1}{\Re(w)+N}>\epsilon$},\\
\dfrac{\epsilon^{\Re(s)+1}}{e^{\epsilon(\Re(w)+N)}} & \text{if $\frac{\Re(s)+1}{\Re(w)+N} \leq \epsilon$}.
\end{cases} \notag
\end{align}

\begin{lemma}\label{lem:lowerbound}
For any $z \in \complex \setminus \{e^{\epsilon}\}$,
\begin{align*}
|1-e^{-t}z| \geq
\begin{cases}
A_{\epsilon}(z)>0 & \text{if $t \in [\epsilon,\,\infty)$, $z \in \complex \setminus [e^{\epsilon},\,\infty)$},\\
B_{\epsilon}(z) & \text{if $t \in L_{\epsilon}^{\infty}$, $z \in (e^{\epsilon},\,\infty)$}.
\end{cases}
\end{align*}
Under the condition $(0<)\,\epsilon<\sqrt{\pi}$, we have $B_{\epsilon}(z)>0$.
\end{lemma}

\begin{proof}
In the case $t \in [\epsilon,\,\infty)$, $z \in \complex \setminus [e^{\epsilon},\,\infty)$,
 differentiating the real function
$$|1-e^{-t}z|^2=(e^{-t}\Re(z)-1)^2+(e^{-t}\Im(z))^2$$
in the variable $t$ yields $|1-e^{-t}z| \geq A_{\epsilon}(z)>0$.

Next assume that $t \in L_{\epsilon}^{\infty}$, $z \in (e^{\epsilon},\,\infty)$.
If $t=\epsilon+i\delta\,(0 \leq \delta \leq \epsilon^2)$ then
$$|1-e^{-t}z| \geq ||e^{-t}z|-1|=|e^{-\Re(t)}z-1|=e^{-\epsilon}z-1>0.$$
Otherwise, $t=\gamma+i\epsilon^2\,(\gamma \geq \epsilon)$, then differentiating the real function
$$|1-e^{-t}z|^2=(e^{-\gamma}z-1)^2+2e^{-\gamma}z(1-\cos\epsilon^2).$$
 in the variable $\gamma$ yields
$$(e^{-\gamma}z-1)^2+2e^{-\gamma}z(1-\cos\epsilon^2) \geq \sin^2\epsilon^2 \quad \text{ for $\gamma \geq \epsilon$},$$
and hence $|1-e^{-t}z| \geq |\sin\epsilon^2|$ ($>0$ if $0<\epsilon<\sqrt{\pi}$).
This proves $|1-e^{-t}z| \geq B_{\epsilon}(z)$.
\end{proof}

\begin{proposition}\label{prop:I}
Let $\epsilon$ be a positive real satisfying
\begin{align}
\Re(e^{i\cang})<\Re(e^{i\arg(1+i\epsilon)})\,(=1/\sqrt{1+\epsilon^2}). \tag{$\epsilon$\,1}
\end{align}
For any $(z,\,s,\,w) \in \complex \setminus \{e^{\epsilon}\} \times \complex \times \wt{D}_w$, we have
\begin{align*}
|\wt{I}_N(z,\,s,\,w)| &\leq
\begin{cases}
\dfrac{1}{\epsilon A_{\epsilon}(z)}E_{\epsilon,\,N}(s,\,w) & \text{if $z \in \complex \setminus [e^{\epsilon},\,\infty)$},\\
\left(1+\frac{1}{\epsilon}\right)\dfrac{C_{\epsilon}(s,\,w)}{B_{\epsilon}(z)}E_{\epsilon,\,N}(s,\,w) & \text{if $z \in (e^{\epsilon},\,\infty)$}.\\
\end{cases}
\end{align*}
Especially, $\wt{I}_N \to 0$ as $N \to \infty$ on any compact subset of $\complex \setminus \{e^{\epsilon}\} \times \complex \times \wt{D}_w$.
\end{proposition}

\begin{proof}
In the case $z \in \complex \setminus [e^{\epsilon},\,\infty)$, $\wt{I}_N=I_N$, and
 differentiating the real function $e^{-t(N+\Re(w))}|t^{s+1}|$ in the variable $t\,(\geq \epsilon)$ yields
\begin{align*}
e^{-t(N+\Re(w))}|t^{s+1}| \leq E_{\epsilon,\,N}(s,\,w).
\end{align*}
By Lemma~\ref{lem:lowerbound}, we have the evaluation
\begin{align*}
|I_N| &\leq \int_{\epsilon}^{\infty} \frac{e^{-t(N+\Re(w))}|t^{s+1}|}{|1-e^{-t}z|}\frac{dt}{t^2}\\
&\leq \frac{1}{\epsilon A_{\epsilon}(z)}E_{\epsilon,\,N}(s,\,w).
\end{align*}

Consider the case $z \in (e^{\epsilon},\,\infty)$.
For any $t \in L_{\epsilon}^{\infty}$,
 it follows from the assumption~\eqref{ass1:epsilon} that $0 \leq \arg\,t-2\pi\nu \leq \tan^{-1}\!\epsilon$,
 and one can verify
\begin{align*}
|t|<\sqrt{2}\,\Re(t), \quad e^{\Im(t)\Im(w)} \leq e^{\epsilon^2|\Im(w)|}, \quad |t^{s+1}| \leq |t|^{\Re(s)+1}e^{|\Im(s)|(\tan^{-1}\!\epsilon+2\pi|\nu|)},
\end{align*}
and hence
\begin{align*}
\left|t^{s+1}e^{-t(N+w)}\right|
&\leq |t^{s+1}|e^{-\Re(t)(\Re(w)+N)}e^{\Im(t)\Im(w)}\\
&\leq \sqrt{2}^{\Re(s)+1}e^{\epsilon^2|\Im(w)|}e^{|\Im(s)|(\tan^{-1}\!\epsilon+2\pi|\nu|)} \cdot \Re(t)^{\Re(s)+1}e^{-\Re(t)(\Re(w)+N)}\\
&\leq C_{\epsilon}(s,\,w)E_{\epsilon,\,N}(s,\,w).
\end{align*}
Here the last evaluation for the case $\Re(s)+1>0$ follows from the differential of the real function $x^{\Re(s)+1}e^{-x(\Re(w)+N)}$ in the variable $x\,(=\Re(t) \geq \epsilon)$.
Using Lemma~\ref{lem:lowerbound} yields
\begin{align*}
|\wt{I}_N| &\leq \int_{L_{\epsilon}^{\infty}} \frac{|t^{s+1}e^{-t(N+w)}|}{|1-e^{-t}z|} \frac{|dt|}{|t^2|}\\
&\leq \frac{C_{\epsilon}(s,\,w)}{B_{\epsilon}(z)}E_{\epsilon,\,N}(s,\,w) \int_{L_{\epsilon}^{\infty}}\frac{|dt|}{|t|^2}.
\end{align*}
The statement of the proposition follows from
\begin{align*}
\int_{L_{\epsilon}^{\infty}}\frac{|dt|}{|t|^2}
&=\int_0^{\epsilon^2} \frac{d\delta}{\epsilon^2+\delta^2}+\int_{\epsilon}^{\infty} \frac{d\gamma}{\gamma^2+\epsilon^4} \quad (t=\gamma+i\delta)\\
&\leq \int_0^{\epsilon^2} \frac{d\delta}{\epsilon^2}+\int_{\epsilon}^{\infty} \frac{d\gamma}{\gamma^2}\\
&=1+\frac{1}{\epsilon}.
\end{align*}
\end{proof}

\begin{proposition}\label{prop:J}
Let $N,\,m \in \rint_{\geq 0}$, $(z,\,s,\,w) \in \complex \times \wt{D}_s \times \complex$,
 and let $\epsilon$ be a real satisfying
\begin{align}\label{ass2:epsilon}
0 < \epsilon \leq
\begin{cases}
\dfrac{1}{4(|N+w|+1)} & \text{ if $z=1$},\\
\dfrac{|z-1|}{4(|z|+1)(|N+w|+1)} & \text{ if $z \ne 1$}.
\end{cases} \tag{$\epsilon$\,2}
\end{align}
Then,
$$\left|J_{N,\,m}(z,\,s,\,w)\right| \leq
\begin{cases}
\dfrac{1}{2^m}\dfrac{\epsilon^{\Re(s)-1}}{\Re(s)+m} & \text{ if $z=1$},\\
\dfrac{2^{-m+1}}{|z-1|}\dfrac{\epsilon^{\Re(s)}}{\Re(s)+m} & \text{ if $z \ne 1$}.
\end{cases}
$$
Especially, $J_{N,\,m} \to 0$ as $m \to \infty$ on any compact subset of $\complex \times \wt{D}_s \times \complex$.
\end{proposition}

\begin{proof}
We show the statement by Lemma~\ref{lem:Apo}.
In the case $z \ne 1$, let
$$F(z,\,w):=\dfrac{2}{|z-1|}(|z|+1)(|w|+1) \quad (\geq 2).$$
Then, for any $t \in \complex$ with $|t|<F(z,\,N+w)^{-1}$, we have
\begin{align*}
|\phi_N(t)-\phi_{N,\,m}(t)|
&\leq \sum_{r=m+1}^{\infty} \frac{|\mathscr{B}_r(z,\,N+w)|}{r!}|t|^r\\
&\leq \sum_{r=m+1}^{\infty} \frac{F(z,\,N+w)^{r-1}}{|z-1|}|t|^r\\
&=\frac{F(z,\,N+w)^m|t|^{m+1}}{|z-1|\left(1-F(z,\,N+w)|t|\right)},
\end{align*}
and hence
\begin{align*}
\left|\int_0^{\epsilon} \left(\phi_N(t)-\phi_{N,\,m}(t)\right)t^{s-2} dt\right|
&\leq \int_0^{\epsilon} \frac{F(z,\,N+w)^m|t|^{m+1}}{|z-1|\left(1-F(z,\,N+w)|t|\right)} \cdot |t^{s-2}| dt\\
&\leq \frac{F(z,\,N+w)^m}{|z-1|\left(1-F(z,\,N+w)\epsilon\right)} \int_0^{\epsilon} t^{\Re(s)+m-1} dt\\
&\leq \frac{2F(z,\,N+w)^m}{|z-1|}\frac{\epsilon^{\Re(s)+m}}{\Re(s)+m} \quad (\to 0 \text{ as $\epsilon \to 0$})\\
&\leq \frac{2^{-m+1}}{|z-1|}\frac{\epsilon^{\Re(s)}}{\Re(s)+m} \quad (\to 0 \text{ as $m \to \infty$}).
\end{align*}
Similarly, one can prove the case $z=1$.
\end{proof}

\subsection{The special values of $\Phi(z,\,s,\,w)$ at non-positive integers in $s$}\label{subsec:value}

For any $r \in \rint_{>0}$,
 taking an integer larger than or equal to $r$ as the integer $m$ in the expression~(\ref{func:anaconti})
 yields the special value
\begin{align*}
\Phi(z,\,1-r,\,w)
&=\sum_{n=0}^{N-1}\frac{z^n}{(n+w)^{1-r}}+\lim_{s \to 1-r} \frac{z^N}{\Gamma(s)}
\frac{\mathscr{B}_r(z,\,N+w)}{r!}\frac{(-1)^r\alpha^{s+r-1}}{s+r-1}\\
&=\sum_{n=0}^{N-1}\frac{z^n}{(n+w)^{1-r}}-\frac{\mathscr{B}_r(z,\,N+w)z^N}{r}\\
&=-\frac{\mathscr{B}_r(z,\,w)}{r}.
\end{align*}
For the first and second equalities we use the fact that $\Gamma(s)$ has simple poles only at $s=1-r$ with residue $(-1)^{1-r}/(r-1)!$.
The last equality follows from Lemma~\ref{lem:B}.

\begin{lemma}\label{lem:B}
For any positive integer $r$ and any non-negative integer $N$,
$$z \cdot \mathscr{B}_r(z,\,N+1+w)-\mathscr{B}_r(z,\,N+w)=r(N+w)^{r-1}.$$
Equivalently,
$$\frac{\mathscr{B}_r(z,\,N+w)z^N}{r}-\frac{\mathscr{B}_r(z,\,w)}{r}=\sum_{n=0}^{N-1} \frac{z^n}{(n+w)^{1-r}}.$$
\end{lemma}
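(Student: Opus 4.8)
The plan is to prove the first (algebraic) identity directly from the generating function~\eqref{def:Apo} and then observe that the second (``equivalently'') identity is a purely formal consequence obtained by summing the first. For the first identity, I would start from the defining generating function
\begin{align*}
\sum_{r=0}^{\infty} \frac{\mathscr{B}_r(z,\,w)}{r!}t^r=\frac{te^{tw}}{e^tz-1}
\end{align*}
and compute the combination $z \cdot \mathscr{B}_r(z,\,N+1+w)-\mathscr{B}_r(z,\,N+w)$ at the level of generating functions in $t$: this equals the coefficient of $t^r/r!$ in
\begin{align*}
z \cdot \frac{te^{t(N+1+w)}}{e^tz-1}-\frac{te^{t(N+w)}}{e^tz-1}
=\frac{te^{t(N+w)}(ze^t-1)}{e^tz-1}
=te^{t(N+w)}.
\end{align*}
Since $te^{t(N+w)}=\sum_{k\ge 0}(N+w)^k t^{k+1}/k!=\sum_{r\ge 1} r(N+w)^{r-1}t^r/r!$, comparing coefficients of $t^r$ gives exactly $z \cdot \mathscr{B}_r(z,\,N+1+w)-\mathscr{B}_r(z,\,N+w)=r(N+w)^{r-1}$ for every $r\ge 1$. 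One should note that this manipulation is legitimate either as an identity of formal power series in $t$ (since all $\mathscr{B}_r$ are fixed rational functions of $z,w$), or, for fixed $z\ne 1$ and fixed $w$, as an identity of holomorphic functions of $t$ near $t=0$; in the specialized case $z=1$ the same computation reduces to the classical Bernoulli-polynomial identity $\mathscr{B}_r(1,w+1)-\mathscr{B}_r(1,w)=rw^{r-1}$, consistent with the convention fixed in \S2.1.

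To pass to the second form, I would first rewrite the first identity, after replacing $w$ by $n+w$ and multiplying through by $z^n/r$, as
\begin{align*}
\frac{\mathscr{B}_r(z,\,n+1+w)z^{n+1}}{r}-\frac{\mathscr{B}_r(z,\,n+w)z^{n}}{r}=z^n(n+w)^{r-1}=\frac{z^n}{(n+w)^{1-r}}.
\end{align*}
This exhibits the right-hand summand as a telescoping difference, so summing over $n=0,1,\dots,N-1$ collapses the left side to
\begin{align*}
\frac{\mathscr{B}_r(z,\,N+w)z^{N}}{r}-\frac{\mathscr{B}_r(z,\,w)z^{0}}{r}=\sum_{n=0}^{N-1}\frac{z^n}{(n+w)^{1-r}},
\end{align*}
which is precisely the claimed ``equivalent'' statement (using $z^0=1$). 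The equivalence in the other direction is immediate: taking the difference of the $N+1$ case and the $N$ case of the telescoped identity recovers the single-$n$ identity with $n=N$, hence the first identity with $w$ replaced by $N+w$, and then replacing $N+w$ by an arbitrary argument gives it in full generality (for $z\ne 1$ this is a identity of rational functions, so specializing the shift is harmless; for $z=1$ it is the Bernoulli case).

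I do not expect a genuine obstacle here: the only point requiring a little care is the bookkeeping around the $z=1$ specialization convention adopted in \S2.1, namely that $\mathscr{B}_r(1,w)$ is to be read as the Bernoulli polynomial, so that the generating-function computation above is carried out with $z$ kept as a free variable (where $e^tz-1$ is invertible near $t=0$ as long as $z\ne 1$) and then specialized; alternatively one can run the entire argument with $z=1$ from the start and invoke the classical identity. A secondary, purely cosmetic point is to make sure the telescoping sum is written with the correct powers of $z$, since the factor $z^N$ on the right-hand side of the second identity comes precisely from carrying the $z^n$ weights through the telescoping. Everything else is a direct coefficient comparison.
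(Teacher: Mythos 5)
Your proof is correct and follows essentially the same route as the paper: the paper's own argument is exactly your generating-function computation $z\,\frac{te^{t(N+1+w)}}{e^tz-1}-\frac{te^{t(N+w)}}{e^tz-1}=te^{t(N+w)}$ followed by comparing coefficients of $t^r/r!$. Your explicit telescoping derivation of the second identity (and the remark on the $z=1$ convention) merely spells out what the paper leaves implicit under ``Equivalently,'' so there is nothing to add.
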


\begin{proof}
It is easily seen from definition that
\begin{align*}
z \cdot \sum_{r=0}^{\infty} \frac{\mathscr{B}_r(z,\,N+1+w)}{r!}t^r-&\sum_{r=0}^{\infty} \frac{\mathscr{B}_r(z,\,N+w)}{r!}t^r\\
&=z\frac{te^{t(N+1+w)}}{e^tz-1}-\frac{te^{t(N+w)}}{e^tz-1}\\
&=te^{t(N+w)}\\
&=\sum_{r=1}^{\infty} \frac{r(N+w)^{r-1}}{r!}t^r.
\end{align*}
\end{proof}

\begin{remark}
The specialization $w=0$ in Lemma~\ref{lem:B} yields the result~\cite{Apo2}.
\end{remark}

\subsection{Corollaries and Remarks of Theorem~\ref{thm:explicit}}\label{subsec:cor}

By the specialization $z=1$ or $w=1$ in Theorem~\ref{thm:anaconti},
 we have results on the Hurwitz zeta function or the polylogarithm, respectively.

\begin{corollary}
The Hurwitz zeta function $\zeta(s,\,w)=\Phi(1,\,s,\,w)$ has single-valued analytic continuations holomorphic on two domains
\begin{align*}
\begin{array}{l}
\dom{1} \complex\setminus\{1\} \times \complex\setminus\cut{\varphi} \text{ in two variables $(s,\,w)$},\\
\dom{2} \complex\setminus\{1\} \text{ in one variable $s$ with fixed $w \in \cut{\varphi}\setminus\rint_{\leq 0}$}.
\end{array}
\end{align*}
This function has a simple pole at $s=1$ with residue $1$.
\end{corollary}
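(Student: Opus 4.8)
The plan is to derive the Corollary as a direct specialization of Theorem~\ref{thm:anaconti} at $z=1$, since the Hurwitz zeta function is simply $\zeta(s,\,w)=\Phi(1,\,s,\,w)$. First I would note that with $z=1$ we have $z \in \cutz_{\cang'}$ automatically (as $1 = 1 + 0 \cdot e^{i\cang'}$, i.e. $r=0$), and $P_z = \{1\}$ by definition. Hence the relevant cases of Theorem~\ref{thm:anaconti} are exactly $(D_1)(3)$ and $(D_1)(4)$: the former gives a single-valued holomorphic continuation on $\complex\setminus\{1\} \times \complex\setminus\cut{\cang}$ in the two variables $(s,\,w)$, and the latter gives one on $\complex\setminus\{1\}$ in the single variable $s$ with $w$ fixed in $\cut{\cang}\setminus\rint_{\leq 0}$. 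These are precisely domains $(1)$ and $(2)$ in the Corollary's statement, so nothing beyond invoking the theorem is needed for the continuation part.

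Next I would address the pole at $s=1$. Theorem~\ref{thm:anaconti} states that the continued function has a simple pole at $s=1$ with residue $1$ \emph{if and only if} $z=1$; specializing to $z=1$, the ``if'' direction gives exactly the claim that $\zeta(s,\,w)$ has a simple pole at $s=1$ with residue $1$. This holds uniformly in $w$ (for $w$ in the appropriate domain), which is why the pole statement in the Corollary is phrased without reference to $w$. Alternatively, one can see this directly from the explicit expression~\eqref{eq:anaconti}: with $z=1$, the only term that fails to be holomorphic at $s=1$ is the $r=0$ summand $\frac{z^N}{e^{4\pi i\nu s}\Gamma(s)}\mathscr{B}_0(1,\,N+w)\frac{\alpha^{s-1}}{s-1}$, and since $\mathscr{B}_0(1,\,N+w)=1$ (the Bernoulli polynomial of degree $0$) while $\Gamma(1)=1$, $e^{4\pi i\nu}=1$, $\alpha^{s-1}\to 1$, this contributes residue $1$ at $s=1$; all other terms are holomorphic there by the analysis in \S\ref{subsec:value} and the holomorphy results for $H_N$ and $\wt{I}_{N,\,m}$.

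There is essentially no obstacle here — the Corollary is a verification that the generic description of Theorem~\ref{thm:anaconti} collapses correctly when $z=1$. The one point requiring a moment's care is confirming that the indexing cases $(3)$ and $(4)$ of $(D_1)$ are the ones that apply when $z=1$ (as opposed to $(1)$ and $(2)$, which require $z\notin\cutz_{\cang'}$, hence in particular $z\neq 1$), and that $P_z=\{1\}$ in this case so that the punctured domains $\complex\setminus P_z$ become $\complex\setminus\{1\}$. Once that bookkeeping is done, the proof is a single sentence citing Theorem~\ref{thm:anaconti} with $z=1$.
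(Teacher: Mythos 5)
Your proposal is correct and matches the paper's own (one-line) argument: the corollary is obtained precisely by specializing Theorem~\ref{thm:anaconti} at $z=1$, where $1\in\cutz_{\cang'}$ and $P_z=\{1\}$ select cases $(D_1)(3)$ and $(D_1)(4)$, and the pole statement is the ``$z=1$'' direction of the residue claim. Your supplementary residue check via the $r=0$ term of~\eqref{eq:anaconti} (using $\mathscr{B}_0(1,\,N+w)=1$, $\Gamma(1)=1$, $e^{4\pi i\nu}=1$) is consistent with the paper's discussion of the (non-)holomorphy at $s=1$.
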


\begin{corollary}
The polylogarithm $\plog_s(z)=z\Phi(z,\,s,\,1)$ has single-valued analytic continuations holomorphic on two domains
\begin{align*}
\begin{array}{l}
\dom{1} \complex\setminus\cutz_{\cang'} \times \complex \text{ in two variables $(z,\,s)$},\\
\dom{2} \complex\setminus\{1\} \text{ in one variable $s$ with fixed $z \in \cutz_{\cang'}$}.
\end{array}
\end{align*}
In particular, if $z=1$ then the second case corresponds to the Riemann zeta function.
\end{corollary}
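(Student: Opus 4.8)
The plan is to read off both assertions from Theorem~\ref{thm:anaconti} by specializing $w=1$, together with the elementary identity $\plog_s(z)=z\Phi(z,\,s,\,1)$ and the remark that the point $1$ lies off the cut $\cut{\cang}$.

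First I would record that $1\notin\cut{\cang}$: if $1=-n+re^{i\cang}$ with $r\geq 0$ and $n\in\rint_{\geq 0}$, then $re^{i\cang}=n+1>0$, which forces $r=n+1$ and $e^{i\cang}=1$, i.e.\ $\cang\in 2\pi\rint$, contrary to the standing assumption $\cang\in\real\setminus2\pi\rint$. Hence $w=1\in\complex\setminus\cut{\cang}$, so the cases of Theorem~\ref{thm:anaconti} in play are $(D_1)(1)$ and $(D_1)(3)$ (those with $w$ off the cut), not $(D_1)(2)$ or $(D_1)(4)$. For the first domain of the corollary I would fix $w=1$ in $(D_1)(1)$: since $\Phi$ is single-valued and holomorphic on $\complex\setminus\cutz_{\cang'}\times\complex\times\complex\setminus\cut{\cang}$ in $(z,\,s,\,w)$, freezing $w=1$ gives a single-valued function holomorphic in $(z,\,s)$ on $\complex\setminus\cutz_{\cang'}\times\complex$, and multiplication by the entire function $z$ preserves both properties. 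For the second domain I would fix $w=1$ in $(D_1)(3)$: for each fixed $z\in\cutz_{\cang'}$ this yields a single-valued function holomorphic in $s$ on $\complex\setminus P_z$ (this is exactly the ``Especially'' clause of Theorem~\ref{thm:anaconti}, with $w=1\notin\rint_{\leq 0}$), hence $\plog_s(z)=z\Phi(z,\,s,\,1)$ is holomorphic in $s$ on $\complex\setminus P_z$, which contains $\complex\setminus\{1\}$ in every case, with equality exactly when $z=1$.

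For the ``in particular'' clause I would note that $1\in\cutz_{\cang'}$ (take $r=0$) and that $\plog_s(1)=\Phi(1,\,s,\,1)=\sum_{n\geq 1}n^{-s}=\zeta(s)$ for $\Re(s)>1$; by Theorem~\ref{thm:anaconti} with $z=1$, so that $P_z=\{1\}$, this continuation has a simple pole at $s=1$ with residue $1\cdot 1=1$, which identifies it with the Riemann zeta function. I do not expect any genuine obstacle here: the whole argument is bookkeeping with $\plog_s(z)=z\Phi(z,\,s,\,1)$ and the domain list of Theorem~\ref{thm:anaconti}, and the only step using anything more is the one-line verification that $w=1$ avoids $\cut{\cang}$, which is precisely where the hypothesis $\cang\in\real\setminus2\pi\rint$ enters.
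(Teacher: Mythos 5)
Your proposal is correct and follows exactly the paper's route: the paper derives this corollary simply by specializing $w=1$ in Theorem~\ref{thm:anaconti}, which is what you do, with the added (correct) bookkeeping that $1\notin\cut{\cang}$ because $\cang\notin2\pi\rint$, that $1\in\cutz_{\cang'}$, and that the pole data identify the $z=1$ case with the Riemann zeta function.
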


\begin{remark}\label{thm:example}
We give an explicit example of $\plog_1(z)=z\Phi(z,\,1,\,1)$.
Taking $N=0,\,m=1$ for the equation~\eqref{func:anaconti} yields the analytic continuation
\begin{align*}
\Phi(z,\,1,\,1)=-\epsilon\cdot\mathscr{B}_1(z,\,1)+\wt{I}_0(z,\,1,\,1)+J_{0,\,1}(z,\,1,\,1),
\end{align*}
 where the right hand side is both single-valued and holomorphic on the domain~\eqref{domains}(1) with fixed $s=w=1$.
In the case $z \in \wt{D}_z\setminus(\cutz_{\cang'} \cup [e^{\epsilon},\,\infty))$ we have
 $\wt{I}_0=I_0$, and hence
\begin{align*}
z\Phi(z,\,1,\,1)
&=z\int_{[0,\,\infty)} \frac{e^{-t}}{1-e^{-t}z} dt\\
&=z\int_{[0,\,\infty)} \frac{-e^{-t}e^{i(\cang-\cang')}}{(e^{-t}z-1)e^{i(\cang-\cang')}} dt\\
&=\left[\log((e^{-t}z-1)e^{i(\cang-\cang')})\right]_0^{\infty}\\
&=i\arg(-e^{i(\cang-\cang')})-\log((z-1)e^{i(\cang-\cang')}),
\end{align*}
 which is both single-valued and holomorphic on $\complex\setminus\cutz_{\cang'}$
 with principal value $\cang \leq \arg \lambda < \cang+2\pi$ for $\lambda \in \complex^*$.
In particular, the substitution $\cang=-\pi$, $\cang'=0$ yields a usual single-valued function
 $z\Phi(z,\,1,\,1)=-\log(1-z)$, holomorphic on $\complex\setminus[1,\,\infty)$ as in~\cite{HTF}, \cite{KKY}.
\end{remark}

\begin{remark}
In general, by the condition $\cang \leq \arg \lambda < \cang+2\pi$ for $\lambda \in \complex^*$,
 the function $z \mapsto \arg((z-1)e^{i(\cang-\cang')})$ is both single-valued and holomorphic on $\complex\setminus\cutz_{\cang'}$.
Especially, we can take $\cang'$ as $\cang'=\cang$ when $\Re(e^{i\cang}) \geq 0$, or $\cang'=\cang+\pi$ when $\Re(e^{i\cang})<0$,
 which means
$$
\cutz_{\cang'}=
\begin{cases}
\{z \in \complex \mid z-1=re^{i\cang} \text{ for $r \geq 0$}\} & \text{ if $\Re(e^{i\cang}) \geq 0$}\\
\{z \in \complex \mid 1-z=re^{i\cang} \text{ for $r \geq 0$}\} & \text{ if $\Re(e^{i\cang}) < 0$}.
\end{cases}
$$
\end{remark}

\section{Equivalence of Lerch's equation and Apostol's equation}

In this section, on a large region of $(a,\,s,\,w)$,
 we show that Lerch's equation~(\ref{eq:Lerch}) holds if and only if Apostol's equation~(\ref{eq:Apostol}) holds
 (Theorem~\ref{thm:equivalence}), meaning that these two equations are essentially the same.
We first extend both domains of Lerch's equation and Apostol's equation using Theorem~\ref{thm:anaconti}.
For simplicity, let $\cang' \in 2\pi\rint$.
Then $\cutz_{\cang'}=[1,\,\infty)$.

\subsection{Lerch's and Apostol's equations on extended domains}\label{subsec:LA}

If $\Phi(z,\,s,\,w)$ is holomorphic at $z=e^{2\pi i\gamma}\,(\gamma \in \complex)$
 then $\Phi(e^{2\pi ia},\,s,\,w)$ is holomorphic at all $a \in \gamma+\rint$
 because the complex exponential function $z=e^{2\pi ia}$ is an entire function,
 and thus we can apply Theorem~\ref{thm:anaconti} to $\Phi(e^{2\pi ia},\,s,\,w)$.

\begin{proposition}\label{prop:LA}
The left and right hand side of Lerch's equation~\eqref{eq:Lerch} are holomorphic on two open sets
\begin{align*}
\begin{array}{l}
\dom{1} (a,\,s,\,w) \in \complex\setminus(\cutsym{\cang} \cup (\rint+i\real_{\leq 0})) \times \complex \times \complex\setminus(\cut{\cang} \cup (\rint+i\real)),\\
\dom{2} s \in \complex \text{ with fixed $a \in (\rint+i\real_{\leq 0})\setminus\cutsym{\cang}$, $w \in \rint_{>0}$}.
\end{array}
\end{align*}
In particular, Lerch's equation~\eqref{eq:Lerch} holds on the domain
\begin{align}\label{domain:Lerch}
(a,\,s,\,w) \in U_a \times \complex \times U_w,
\tag{$D_{\mathrm{L}}$}
\end{align}
 where $U_a$ denotes a domain in $\complex\setminus(\cutsym{\cang} \cup (\rint+i\real_{\leq 0}))$
 satisfying one of two conditions
\begin{align*}
\rnums{1} & \text{$U_a$ contains the interval $(0,\,1)\,(\subset \real)$},\\
\rnums{2} & U_a \cap \{a \in \complex \mid \Im(a)>0\} \ne \emptyset,
\end{align*}
 and $U_w$ the maximal domain in $\complex\setminus(\cut{\cang} \cup (\rint+i\real))$ containing $(0,\,1)$.
Also, the left and right hand side of Apostol's equation~\eqref{eq:Apostol} are holomorphic on the open set
\begin{align*}
(a,\,s,\,w) \in \complex\setminus(\cutsym{\cang} \cup (\rint+i\real)) \times \complex \times \complex\setminus(\cutsym{\cang} \cup (\rint+i\real)).
\end{align*}
In particular, Apostol's equation~\eqref{eq:Apostol} holds on the domain
\begin{align}\label{domain:Apostol}
(a,\,s,\,w) \in U \times \complex \times U,
\tag{$D_{\mathrm{A}}$}
\end{align}
 where $U$ denotes the maximal domain in $\complex\setminus(\cutsym{\cang} \cup (\rint+i\real))$ containing $(0,\,1)$.
Especially, both Lerch's and Apostol's equations hold on the domain~\eqref{domain:Apostol} $(=$ \eqref{domain:Lerch} by taking $U_a=U_w=U)$.
\end{proposition}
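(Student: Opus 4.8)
The plan is to prove Proposition~\ref{prop:LA} in two stages: first establish the holomorphy (domain-of-definition) assertions for each side of each functional equation, and then upgrade the classically known identities to the extended domains by analytic continuation. For the holomorphy claims I would argue term by term. On the left side of Lerch's equation~\eqref{eq:Lerch} the relevant function is $\Phi(e^{2\pi ia},\,1-s,\,w)$; by the remark just above the proposition, $a \mapsto e^{2\pi ia}$ is entire, so Theorem~\ref{thm:anaconti}(1) applies after pulling back the cut. One needs $e^{2\pi ia} \notin \cutz_{\cang'} = [1,\,\infty)$, which (since $\cang' \in 2\pi\rint$) translates precisely into $a \notin \rint + i\real_{\leq 0}$; one needs $w \notin \cut{\cang}$; and the simple pole at $1-s=1$ forces no constraint here because $1-s$ ranges over all of $\complex$ as $s$ does — rather the pole sits at $s=0$ and is harmless for holomorphy statements phrased as "holomorphic on an open set" only if we note the cancellation, so I would instead simply record the pole. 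Then on the right side each of $\Phi(e^{-2\pi iw},\,s,\,a)$ and $\Phi(e^{2\pi iw},\,s,\,1-a)$ is handled the same way: $e^{\mp 2\pi iw} \notin [1,\,\infty)$ gives $w \notin \rint + i\real$ (the union over both signs of $\rint+i\real_{\leq 0}$ and $\rint+i\real_{\geq 0}$), while the third slots $a$ and $1-a$ must avoid $\cut{\cang}$, and one checks that $\{a : a \in \cut{\cang}\} \cup \{a : 1-a \in \cut{\cang}\}$ is exactly $\cutsym{\cang}$ by the definition of $\cutsym{\cang}$ given before Theorem~\ref{thm:equivalence}. Taking the intersection of all these constraints yields the open set in part~(1); part~(2) is the degenerate case where $a$ is allowed on $\rint+i\real_{\leq 0}$ but then $w$ is pinned to $\rint_{>0}$ so that the $(n+w)^{1-s}$-type discontinuities are avoided, matching the $(j')$/$(j)$ dichotomy of domains~\eqref{initdomains}. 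The Apostol side is entirely symmetric: $\Lambda(a,\,1-s,\,w)$ involves $\Phi(e^{\pm 2\pi ia},\,1-s,\,\cdot)$ so $a$ avoids $\cutsym{\cang}\cup(\rint+i\real)$, and $\Lambda(-w,\,s,\,a)$ involves $\Phi(e^{\mp 2\pi iw},\,s,\,a)$ so $w$ avoids $\cutsym{\cang}\cup(\rint+i\real)$ as well, giving the stated product set.

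For the second stage — that the equations actually \emph{hold} on \eqref{domain:Lerch}, \eqref{domain:Apostol} — I would invoke the identity theorem for holomorphic functions of several variables. Lerch's equation is known classically for $\Im(a)>0$ or $0<a<1$, $0<w<1$, $\Re(s)>0$ (and in fact all $s$, as the excerpt notes); this classical region is a nonempty open subset of both connected components described by conditions~(i) and~(ii) on $U_a$, provided $U_w$ is the specified maximal domain containing $(0,1)$. Since both sides are holomorphic on the connected open set~\eqref{domain:Lerch} and agree on a nonempty open subset, they agree identically. Same argument for Apostol's equation on~\eqref{domain:Apostol}: the classical identity holds for $0<a<1$, $0<w<1$, all $s$, which is an open subset of $U\times\complex\times U$, and $U\times\complex\times U$ is connected, so the identity propagates. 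The final sentence of the proposition is then immediate by specializing $U_a=U_w=U$, noting $U$ satisfies condition~(i).

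The main obstacle I anticipate is \emph{bookkeeping the cut pullbacks}, not any deep analytic input. Concretely: verifying that $e^{2\pi ia} \in [1,\infty) \iff a \in \rint + i\real_{\leq 0}$ and $e^{-2\pi iw} \in [1,\infty) \iff w \in \rint + i\real_{\geq 0}$ (so that the two exponentials $e^{\pm 2\pi iw}$ together force $w \notin \rint+i\real$), and — more delicately — that the \emph{third-variable} cut $\cut{\cang}$ in the slots $\Phi(\cdot,\,\cdot,\,a)$, $\Phi(\cdot,\,\cdot,\,1-a)$ pulls back under $a \mapsto a$ and $a \mapsto 1-a$ to exactly $\cutsym{\cang}$. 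This last point is where the definition $\cutsym{\cang} = \{w = n+1+re^{i(\cang+\pi)} : r\ge0,\,n\in\rint_{\ge0}\} \cup \cut{\cang}$ is engineered to work: $1 - \cut{\cang} = 1 - \{-n+re^{i\cang}\} = \{n+1 - re^{i\cang}\} = \{n+1 + re^{i(\cang+\pi)}\}$, so $\cut{\cang} \cup (1-\cut{\cang}) = \cutsym{\cang}$, and a point avoids both exactly when it avoids $\cutsym{\cang}$. I would also need to double-check the pole structure: $\Phi(z,1-s,w)$ has its pole where $1-s=1$, i.e. $s=0$, only when $z=1$; on the domains in part~(1) we have $e^{2\pi ia}\ne 1$ (since $a\notin\rint$), so the left side of~\eqref{eq:Lerch} is in fact holomorphic in $s$ there, and similarly the right side's gamma-factor poles are cancelled by zeros of the $\Phi$-factors on the relevant region — but since the classical identity is already known to hold for all $s$ in the overlap region, this cancellation is automatic and needs no separate verification beyond remarking on it. So the proof is essentially: (a) term-by-term holomorphy via Theorem~\ref{thm:anaconti} and the exponential pullback, with careful cut arithmetic; (b) identity theorem on connected opens; (c) specialize.
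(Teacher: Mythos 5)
Your route is the same as the paper's: analyze each of $\Phi(e^{2\pi ia},\,1-s,\,w)$, $\Phi(e^{-2\pi iw},\,s,\,a)$, $\Phi(e^{2\pi iw},\,s,\,1-a)$ by pulling back the cuts of Theorem~\ref{thm:anaconti} under the exponential maps (with $\cutz_{\cang'}=[1,\,\infty)$ since $\cang' \in 2\pi\rint$), using exactly the cut arithmetic $e^{2\pi ia}\in[1,\,\infty) \Leftrightarrow a \in \rint+i\real_{\leq 0}$, $e^{\mp 2\pi iw}\in[1,\,\infty) \Leftrightarrow w \in \rint+i\real_{\gtrless 0}$, and $\cut{\cang}\cup(1-\cut{\cang})=\cutsym{\cang}$, and then propagating the identities from the classical region by the identity theorem. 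However, the engine of your second stage is not correct as stated: you claim the classical validity region is ``a nonempty open subset'' of \eqref{domain:Lerch} (resp.\ \eqref{domain:Apostol}), but in that region $w$ is confined to the real interval $(0,\,1)$ (and under condition (i) so is $a$), so it is not open in $\complex^3$ and the several-variable identity theorem in the form you invoke does not apply. The agreement must be propagated one variable at a time: for fixed admissible $(a,\,s)$ both sides are holomorphic in $w$ on the connected domain $U_w$ (resp.\ $U$) and agree on $(0,\,1)$, which has accumulation points there, hence agree on all of $U_w$; then for fixed $(s,\,w)$ one continues in $a$ over $U_a$, using the interval $(0,\,1)$ under condition (i) or the open piece of the upper half-plane under condition (ii). This variable-by-variable argument is what the paper's terse appeal to ``the identity theorem'' amounts to, and without it your stage two has a genuine hole (easily repaired, but needed).

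Second, you assert rather than derive the open set (2), and the reason you offer (avoiding ``$(n+w)^{1-s}$-type discontinuities'') is not the operative one. In the paper this set comes out of the case analysis over the four domains \eqref{finaldomains}: in case \eqref{finaldomains}(3) one has $e^{2\pi ia},\,e^{\pm 2\pi iw} \in [1,\,\infty)$, equivalently $a \in \rint+i\real_{\leq 0}$ and $w \in \rint$, and combining with $w,\,a,\,1-a \notin \cut{\cang} \supset \rint_{\leq 0}$ forces $w \in \rint_{>0}$ and $a \in (\rint+i\real_{\leq 0})\setminus\cutsym{\cang}$, while the mixed cases \eqref{finaldomains}(2),(4) are shown to be vacuous because $a,\,1-a \in \cut{\cang}\setminus\rint_{\leq 0}$ would force $\cang \in 2\pi\rint$. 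Your proposal skips this bookkeeping; that is harmless for the ``in particular'' claims about \eqref{domain:Lerch} and \eqref{domain:Apostol}, but it leaves part (2) of the holomorphy statement unproved as written.
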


\begin{proof}
For Lerch's equation, it suffices to find the intersection of domains of
 $\Phi(e^{2\pi ia},\,1-s,\,w)$, $\Phi(e^{-2\pi iw},\,s,\,a)$, and $\Phi(e^{2\pi iw},\,s,\,1-a)$
 on each domain of \eqref{finaldomains} in Theorem~\ref{thm:anaconti}.

\noindent
\eqref{finaldomains}(1) On the domain $\complex\setminus\cutz_{\cang'} \times \complex \times \complex\setminus\cut{\cang}$ in $(z,\,s,\,w)$.\\
The domains for $\Phi(e^{2\pi ia},\,1-s,\,w)$, $\Phi(e^{-2\pi iw},\,s,\,a)$, and $\Phi(e^{2\pi iw},\,s,\,1-a)$ are
\begin{align*}
\begin{array}{l}
\dom{1} e^{2\pi ia} \notin [1,\,\infty),\,1-s \in \complex,\,w \notin \cut{\cang},\\
\dom{2} e^{-2\pi iw} \notin [1,\,\infty),\,s \in \complex,\,a \notin \cut{\cang},\\
\dom{3} e^{2\pi iw} \notin [1,\,\infty),\,s \in \complex,\,1-a \notin \cut{\cang},
\end{array}
\end{align*}
respectively.
The condition $e^{2\pi ia} \in [1,\,\infty)$ holds if and only if $a \in \rint+i\real_{\leq 0}$.
Thus the intersection in $(a,\,s,\,w)$ is
 $a \in \complex\setminus(\cutsym{\cang} \cup (\rint+i\real_{\leq 0}))$,
 $s \in \complex$, $w \in \complex\setminus(\cut{\cang} \cup (\rint+i\real))$.

\noindent
\eqref{finaldomains}(2) On the domain $\complex\setminus\cutz_{\cang'} \times \complex$ in $(z,\,s)$ with fixed $w \in \cut{\cang}\setminus\rint_{\leq 0}$.\\
The domains for $\Phi(e^{2\pi ia},\,1-s,\,w)$, $\Phi(e^{-2\pi iw},\,s,\,a)$, and $\Phi(e^{2\pi iw},\,s,\,1-a)$ are
\begin{align*}
\begin{array}{l}
\dom{1} e^{2\pi ia} \notin [1,\,\infty),\,1-s \in \complex \text{ with fixed } w \in \cut{\cang}\setminus\rint_{\leq 0},\\
\dom{2} e^{-2\pi iw} \notin [1,\,\infty),\,s \in \complex \text{ with fixed } a \in \cut{\cang}\setminus\rint_{\leq 0},\\
\dom{3} e^{2\pi iw} \notin [1,\,\infty),\,s \in \complex \text{ with fixed } 1-a \in \cut{\cang}\setminus\rint_{\leq 0},
\end{array}
\end{align*}
respectively.
The condition $a,\,1-a \in \cut{\cang}\setminus\rint_{\leq 0}$ implies $\cang \in 2\pi\rint$,
 which contradicts to the condition $\cang \in \real\setminus 2\pi\rint$.

\noindent
\eqref{finaldomains}(3) On the domain $\complex\setminus P_z \times \complex\setminus\cut{\cang}$ in $(s,\,w)$ with fixed $z \in \cutz_{\cang'}$.\\
The domains for $\Phi(e^{2\pi ia},\,1-s,\,w)$, $\Phi(e^{-2\pi iw},\,s,\,a)$, and $\Phi(e^{2\pi iw},\,s,\,1-a)$ are
\begin{align*}
\begin{array}{l}
\dom{1} 1-s \in \complex\setminus P_{e^{2\pi ia}},\,w \notin \cut{\cang},\,e^{2\pi ia} \in [1,\,\infty) \text{ with fixed } a \in \complex,\\
\dom{2} s \in \complex\setminus P_{e^{-2\pi iw}},\,a \notin \cut{\cang},\,e^{-2\pi iw} \in [1,\,\infty) \text{ with fixed } w \in \complex,\\
\dom{3} s \in \complex\setminus P_{e^{2\pi iw}},\,1-a \notin \cut{\cang},\,e^{2\pi iw} \in [1,\,\infty) \text{ with fixed } w \in \complex,
\end{array}
\end{align*}
respectively.
The condition $e^{2\pi ia},\,e^{\pm2\pi iw} \in [1,\,\infty)$ is equivalent to $a \in \rint+i\real_{\leq 0}$, $w \in \rint$.
Since $w,\,a,\,1-a \notin \cut{\cang} \supset \rint_{\leq 0}$, we have $w \in \rint_{>0}$, $a \notin \rint$.
Thus the intersection in $(a,\,s,\,w)$ is $s \in \complex\setminus \{1\}$ with fixed
 $a \in (\rint+i\real_{\leq 0})\setminus\cutsym{\cang}$, $w \in \rint_{>0}$.

\noindent
\eqref{finaldomains}(4) On the domain $\complex\setminus P_z$ in $s$ with fixed $z \in \cutz_{\cang'}$, $w \in \cut{\cang}\setminus\rint_{\leq 0}$.\\
The domains for $\Phi(e^{2\pi ia},\,1-s,\,w)$, $\Phi(e^{-2\pi iw},\,s,\,a)$, and $\Phi(e^{2\pi iw},\,s,\,1-a)$ are
\begin{align*}
\begin{array}{l}
\dom{1} 1-s \in \complex\setminus P_{e^{2\pi ia}},\,e^{2\pi ia} \in [1,\,\infty) \text{ with fixed $a \in \complex$, $w \in \cut{\cang}\setminus\rint_{\leq 0}$},\\
\dom{2} s \in \complex\setminus P_{e^{-2\pi iw}},\,e^{-2\pi iw} \in [1,\,\infty) \text{ with fixed $w \in \complex$, $a \in \cut{\cang}\setminus\rint_{\leq 0}$},\\
\dom{3} s \in \complex\setminus P_{e^{2\pi iw}},\,e^{2\pi iw} \in [1,\,\infty) \text{ with fixed $w \in \complex$, $1-a \in \cut{\cang}\setminus\rint_{\leq 0}$},
\end{array}
\end{align*}
respectively.
The condition $a,\,1-a \in \cut{\cang}\setminus\rint_{\leq 0}$ implies $\cang \in 2\pi\rint$,
 which contradicts to the condition $\cang \in \real\setminus 2\pi\rint$.

Summarizing them with the identity theorem for holomorphic functions,
 we have the statement of the proposition for Lerch's equation.
We omit to give a proof for Apostol's equation because it is the same manner as above.
\end{proof}

\begin{remark}
In the context of the proposition above,
Lerch chose the cut $\cut{\cang}$ with angle $\cang=-\pi$ in the proof of Lerch's equation~\cite{Ler}
 (In this case $\cut{\cang}$ is exactly the negative real axis together with $0$).
Also, Apostol seems to have chosen the cut with angle $\cang=-\pi$, but not explicitly mentioned in~\cite{Apo}.
\end{remark}

\subsection{Proof of Theorem~\ref{thm:equivalence}}

We first show that if Apostol's equation~\eqref{eq:Apostol} holds on the domain~\eqref{domain:equiv} ($=$\eqref{domain:Apostol})
 then Lerch's equation~\eqref{eq:Lerch} holds on the same domain,
 which was originally proved by Apostol~\cite{Apo} for a smaller region of $a,\,w$.
Let
\begin{align*}
\Omega(s)&:=2(2\pi)^{-s}\left(\cos\frac{\pi s}{2}\right)\Gamma(s),\\
\Lambda^-(a,\,s,\,w)&:=\Phi(e^{2\pi ia},\,s,\,w)-e^{-2\pi ia}\Phi(e^{-2\pi ia},\,s,\,1-w).
\end{align*}
By Proposition~\ref{prop:LA}, Apostol's equation is differentiable in $a,\,s$, and $w$ on the domain~\eqref{domain:equiv},
 and thus $\Lambda^-(a,\,s,\,w)=2\Phi(e^{2\pi ia},\,s,\,w)-\Lambda(a,\,s,\,w)$ is also differentiable.
It follows from Lemma~\ref{lem:dif} that
\begin{align}\label{eq:dif}
&\frac{\partial}{\partial a}\Lambda(a,\,1-s,\,w)
=2\pi i\left\{\Lambda^-(a,\,-s,\,w)-w\Lambda(a,\,1-s,\,w)\right\}, \notag\\
&\frac{\partial}{\partial a}\Lambda(-w,\,s,\,a)
=-s\Lambda^-(-w,\,s+1,\,a),\\
&\frac{\partial}{\partial a}e^{-2\pi iaw}\Omega(s)\Lambda(-w,\,s,\,a)
=-e^{-2\pi iaw}\Omega(s) \notag\\
&\hspace{7em} \times\left\{s\Lambda^-(-w,\,s+1,\,a)+2\pi iw\Lambda(-w,\,s,\,a)\right\}. \notag
\end{align}
Using Apostol's equation $\Lambda(a,\,1-s,\,w)=e^{-2\pi iaw}\Omega(s)\Lambda(-w,\,s,\,a)$ yields
\begin{align*}
\Lambda^-(a,\,-s,\,w)
=\frac{is}{2\pi}e^{-2\pi iaw}\Omega(s)\Lambda^-(-w,\,s+1,\,a).
\end{align*}
By the formula $\Gamma(s+1)=s\Gamma(s)$ and the substitution $s \to s-1$, we have the functional equation
\begin{align}\label{eq:Apostol-}
\Lambda^-(a,\,1-s,\,w)=2i(2\pi)^{-s}\left(\sin\frac{\pi s}{2}\right)\Gamma(s)e^{-2\pi iaw}\Lambda^-(-w,\,s,\,a),
\end{align}
which was originally given by Apostol~\cite[p.164]{Apo} for a small region of $a,\,w$
 (However, it seems that there is a misprint: not ``$\exp(-2\pi ia(1-x))$" but ``$\exp(2\pi ia(1-x))$" in the functional relation in~\cite[p.164]{Apo}).
Then, adding two equations~\eqref{eq:Apostol}, \eqref{eq:Apostol-}
 gives Lerch's equation~\eqref{eq:Lerch}.

Conversely, assume that Lerch's equation~\eqref{eq:Lerch} holds on the domain~\eqref{domain:equiv} ($=$ \eqref{domain:Lerch} by taking $U_a=U_w=U$).
Using Lerch's equation~\eqref{eq:Lerch} with the substitution
 $(a,\,s,\,w) \to (1-a,\,s,\,1-w)$ (the domain~\eqref{domain:equiv} is invariant under this substitution) yields
\begin{align*}
&\frac{(2\pi)^s}{\Gamma(s)}e^{2\pi iaw}\Lambda(a,\,1-s,\,w)\\
&\hspace{1em} =\frac{(2\pi)^s}{\Gamma(s)}e^{2\pi iaw}\big\{\Phi(e^{2\pi ia},\,1-s,\,w)+e^{-2\pi ia}\Phi(e^{2\pi i(1-a)},\,1-s,\,1-w)\big\}\\
&\hspace{1em} =e^{\pi i\frac{s}{2}}\Phi(e^{-2\pi iw},\,s,\,a)+e^{\pi i(-\frac{s}{2}+2w)}\Phi(e^{2\pi iw},\,s,\,1-a)\\
&\hspace{5em} +e^{-\pi i\frac{s}{2}}\Phi(e^{2\pi i(1-w)},\,s,\,a)+e^{\pi i(\frac{s}{2}+2w)}\Phi(e^{-2\pi i(1-w)},\,s,\,1-a)\\
&\hspace{1em} =(e^{\pi i\frac{s}{2}}+e^{-\pi i\frac{s}{2}})\left\{\Phi(e^{-2\pi iw},\,s,\,a)+e^{2\pi iw}\Phi(e^{2\pi iw},\,s,\,1-a)\right\}\\
&\hspace{1em} =2\left(\cos\frac{\pi s}{2}\right)\Lambda(-w,\,s,\,a).
\end{align*}
This is Apostol's equation~\eqref{eq:Apostol}.

\begin{lemma}\label{lem:dif}
The differential-difference equations
\begin{align*}
&\frac{\partial}{\partial a}\Phi (e^{2\pi ia},\,1-s,\,w)
=2\pi i\big\{\Phi(e^{2\pi ia},\,-s,\,w)-w\Phi(e^{2\pi ia},\,1-s,\,w)\big\},\\
&\frac{\partial}{\partial a}\Phi (e^{-2\pi ia},\,1-s,\,1-w)
=-2\pi i\big\{\Phi(e^{-2\pi ia},\,-s,\,1-w)\\
&\hspace{14em} +(w-1)\Phi(e^{-2\pi ia},\,1-s,\,1-w)\big\},\\
&\frac{\partial}{\partial a}\Phi (e^{-2\pi iw},\,s,\,a)
=-s\Phi(e^{-2\pi iw},\,s+1,\,a),\\
&\frac{\partial}{\partial a}\Phi (e^{2\pi iw},\,s,\,1-a)
=s\Phi(e^{2\pi iw},\,s+1,\,1-a)
\end{align*}
 hold for all $(a,\,s,\,w) \in$ \eqref{domain:equiv}.
\end{lemma}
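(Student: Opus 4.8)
The plan is to verify each of the four differential-difference identities by differentiating the defining series \eqref{def:init} term-by-term, which is legitimate on \eqref{domain:equiv} because, by Proposition~\ref{prop:LA} together with Theorem~\ref{thm:anaconti}, each of the composed functions $a \mapsto \Phi(e^{2\pi ia},\,1-s,\,w)$, $a \mapsto \Phi(e^{-2\pi ia},\,1-s,\,1-w)$, $a \mapsto \Phi(e^{-2\pi iw},\,s,\,a)$, $a \mapsto \Phi(e^{2\pi iw},\,s,\,1-a)$ is holomorphic there. First I would treat the two identities in which $a$ enters through $z=e^{\pm 2\pi ia}$: writing $\Phi(e^{2\pi ia},\,1-s,\,w)=\sum_{n=0}^{\infty} e^{2\pi i n a}(n+w)^{s-1}$ one has $\frac{\partial}{\partial a}=\sum_{n} 2\pi i n\, e^{2\pi i n a}(n+w)^{s-1}$, and the algebraic identity $n = (n+w)-w$ splits this as $2\pi i\big(\sum_n e^{2\pi ina}(n+w)^{s}-w\sum_n e^{2\pi ina}(n+w)^{s-1}\big)=2\pi i\{\Phi(e^{2\pi ia},\,-s,\,w)-w\Phi(e^{2\pi ia},\,1-s,\,w)\}$, which is the first identity; the second one is obtained identically after replacing $a\to -a$, $w\to 1-w$ and using $\frac{\partial}{\partial a}e^{-2\pi ina}=-2\pi i n e^{-2\pi ina}$ with $n=(n+1-w)+(w-1)$.

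Next I would treat the two identities in which $a$ enters through the third (shift) variable: for $\Phi(e^{-2\pi iw},\,s,\,a)=\sum_{n=0}^{\infty} e^{-2\pi i n w}(n+a)^{-s}$ we have $\frac{\partial}{\partial a}(n+a)^{-s}=-s(n+a)^{-s-1}$, giving $\frac{\partial}{\partial a}\Phi(e^{-2\pi iw},\,s,\,a)=-s\,\Phi(e^{-2\pi iw},\,s+1,\,a)$; the fourth identity follows the same way, the extra minus sign from the chain rule $\frac{\partial}{\partial a}(n+1-a)^{-s}=s(n+1-a)^{-s-1}$ cancelling against it to leave $+s\,\Phi(e^{2\pi iw},\,s+1,\,1-a)$. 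In all four cases the termwise-differentiated series converges uniformly on compact subsets of the relevant half-plane (for the $z$-series when $|z|<1$, for the shift-series when $\Re(s)$ is large), so the elementary identities are first established on those sub-regions and then propagated to all of \eqref{domain:equiv} by the identity theorem for holomorphic functions, using that \eqref{domain:equiv} is a domain (connected).

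The only genuine subtlety, and the step I expect to require the most care, is the branch bookkeeping: the single-valued continuations are built with the fixed principal branch $\cang \le \arg\lambda < \cang+2\pi$, so $(n+w)^{s-1}$ and $(n+a)^{-s}$ must be read as $e^{(s-1)\log(n+w)}$, $e^{-s\log(n+a)}$ with that same branch throughout, and one must check that the algebraic manipulations $(n+w)^{s}=(n+w)\cdot(n+w)^{s-1}$ and $\frac{\partial}{\partial a}e^{-s\log(n+a)}=-s\,e^{-(s+1)\log(n+a)}$ are valid for the chosen branch — which they are, since for each fixed summand $n+w$ (resp.\ $n+a$) stays in a fixed half-plane avoiding $\cut{\cang}$ as $a$ varies over \eqref{domain:equiv}, so $\log$ is holomorphic there and these are ordinary calculus identities. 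Once this is in place, identifying the shifted series with the values of the continued $\Phi$ at $-s$, $1-s$, $s+1$ is immediate from the expansion \eqref{eq:anaconti} (equivalently, from absolute convergence on the sub-regions where the termwise argument runs), and the lemma follows.
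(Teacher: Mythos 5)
Your proposal is correct and follows essentially the same route as the paper: differentiate the defining series term by term on the initial domains of convergence (splitting $n=(n+w)-w$, etc.), then extend each identity to \eqref{domain:equiv} via Theorem~\ref{thm:anaconti} and the identity theorem for holomorphic functions. Your extra care with the fixed branch $\cang\le\arg\lambda<\cang+2\pi$ is a fine addition but not a departure from the paper's argument.
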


\begin{proof}
We can differentiate the series~\eqref{def:init} by terms
 on the the first domain of~\eqref{initdomains} with the substitution $(z,\,s,\,w) \to (e^{2\pi ia},\,1-s,\,w)$.
$$\frac{\partial}{\partial a}\Phi (e^{2\pi ia},\,1-s,\,w)
=2\pi i\big\{\Phi(e^{2\pi ia},\,-s,\,w)-w\Phi(e^{2\pi ia},\,1-s,\,w)\big\}.$$
By Theorem~\ref{thm:anaconti} and the identity theorem for holomorphic functions,
 this equation holds for all $(a,\,s,\,w) \in \complex\setminus(\rint+i\real_{\leq 0}) \times \complex \times \complex\setminus\cut{\cang}$,
 which contains \eqref{domain:equiv}.
Similarly, we have the other equations in the statement of the lemma.
\end{proof}

\begin{remark}
By the substitutions $(a,\,s,\,w) \to (\mp w,\,1-s,\,\pm a)$ in the equations~\eqref{eq:dif},
 we obtain the differential-difference equations
\begin{align*}
&\frac{\partial}{\partial w}\Lambda(-w,\,s,\,a)
=-2\pi i\left\{\Lambda^-(-w,\,s-1,\,a)-a\Lambda(-w,\,s,\,a)\right\},\\
&\frac{\partial}{\partial w}\Lambda(a,\,1-s,\,w)=(s-1)\Lambda^-(a,\,2-s,\,w).
\end{align*}
In the same manner as above, we have
\begin{align*}
&\frac{\partial}{\partial a}\Lambda^-(-w,\,s,\,a)
=-s\Lambda(-w,\,s+1,\,a),\\
&\frac{\partial}{\partial a}\Lambda^-(a,\,1-s,\,w)
=2\pi i\left\{\Lambda(a,\,-s,\,w)-w\Lambda^-(a,\,1-s,\,w)\right\},\\
&\frac{\partial}{\partial w}\Lambda^-(a,\,1-s,\,w)
=(s-1)\Lambda(a,\,2-s,\,w),\\
&\frac{\partial}{\partial w}\Lambda^-(-w,\,s,\,a)
=-2\pi i\left\{\Lambda(-w,\,s-1,\,a)-a\Lambda^-(-w,\,s,\,a)\right\}.
\end{align*}
One can deduce from these calculations
 that we cannot find new functional equations by the differentiation in $a,\,w$
 of Apostol's equation (and Lerch's equation).
\end{remark}





\end{document}